\documentclass[11pt]{article}

\usepackage[a4paper,left=2.5cm,right=2.5cm,top=2.8cm,bottom=2.8cm]{geometry}
\usepackage[T1]{fontenc}
\usepackage[utf8]{inputenc}
\usepackage{lmodern,microtype,xcolor,amsmath,amsthm,amssymb}
\usepackage{empheq,mathtools,enumitem,etoolbox}
\usepackage{physics}
\usepackage{dsfont}
\usepackage[colorlinks=true,linkcolor=black,urlcolor=black,citecolor=blue]{hyperref}
\hypersetup{pdftitle={Second quantization and compactness of induced Gaussian quantum Markov semigroups},pdfauthor={Zheng Li},pdfkeywords={Gaussian quantum Markov semigroup, compact resolvent, second quantization, GNS embedding, trace class}}
\numberwithin{equation}{section}
\newcommand{\C}{\mathbb{C}}
\newcommand{\N}{\mathbb{N}}
\newcommand{\R}{\mathbb{R}}
\newcommand{\1}{\mathds{1}}
\newcommand{\cB}{\mathcal{B}}
\newcommand{\cH}{\mathcal{H}}
\newcommand{\cL}{\mathcal{L}}
\newcommand{\cS}{\mathcal{S}}
\newcommand{\cT}{\mathcal{T}}
\newcommand{\mi}{\mathrm{i}}
\newcommand{\me}{\mathrm{e}}
\newcommand{\md}{\mathrm{d}}
\DeclareMathOperator{\dom}{Dom}
\DeclareMathOperator{\myspan}{span}
\DeclareMathOperator{\diag}{diag}
\DeclareMathOperator{\gap}{gap}
\newtheorem{theorem}{Theorem}[section]
\newtheorem{proposition}[theorem]{Proposition}
\newtheorem{lemma}[theorem]{Lemma}
\newtheorem{corollary}[theorem]{Corollary}
\theoremstyle{definition}
\newtheorem{example}[theorem]{Example}
\newtheorem{remark}[theorem]{Remark}
\allowdisplaybreaks
\makeatletter
\AtBeginDocument{\patchcmd{\@citex}{,\penalty\@m\ }{;\penalty\@m\ }{}{}}
\patchcmd{\thebibliography}{\advance\leftmargin\labelsep}
 {\setlength{\labelsep}{1em}\advance\leftmargin\labelsep}{}{}
\makeatother

\title{Second quantization and compactness of induced Gaussian quantum Markov semigroups}
\author{Zheng Li\\[1ex]
\small School of Mathematics and Statistics, Central South University\\
\small Changsha 410083, China\\[0.5ex]
\small\href{mailto:zheng.li@csu.edu.cn}{zheng.li@csu.edu.cn}}
\date{}

\begin{document}
\maketitle
\begin{abstract}
We characterize compactness of the Hilbert--Schmidt semigroups induced by finite-mode Gaussian quantum Markov semigroups with a faithful normal invariant state. A unitary second-quantization representation for every $ s $-embedding shows that compactness at one positive time, compact resolvent, drift stability, and uniqueness of the normal invariant state are equivalent. These conditions imply immediate trace-class membership for all embeddings. We derive exact trace-norm and Hilbert--Schmidt norm formulas, an embedding-independent trace identity, and the generator spectrum with algebraic multiplicities. Examples show that compactness does not require the existence of a spectral gap and that a spectral gap need not imply compactness when the invariant state is not unique.
\end{abstract}

{\small
\noindent\textbf{Mathematics Subject Classification (2020).}
Primary 47D07; Secondary 46L57, 47B10, 81S22.\par
\smallskip
\noindent\textbf{Keywords.} Gaussian quantum Markov semigroup; second quantization; compact resolvent; weighted embedding; trace class.
}

\section{Introduction} \label{section-introduction}

Gaussian quantum Markov semigroups (QMSs), also called quasi-free QMSs, describe bosonic dynamics with a quadratic Hamiltonian and linear noise operators. Their action on Weyl operators is determined by a finite-dimensional drift, a diffusion operator, and a displacement; see \eqref{eq-gaussian-action} and \eqref{eq-gaussian-multiplier}. In this paper, we study compactness of the semigroups induced on the Hilbert--Schmidt space, together with compactness of their generator resolvents. We also examine the relation between compactness and coercive spectral gaps.

Let $ ( \cT_t ) _{t \geq 0} $ be a Gaussian QMS on $ \cB ( \Gamma ( \C^d ) ) $, with $ d \geq 1 $, and let $ \rho $ be a faithful normal invariant state. For each $ s \in [ 0, 1 ] $, the prescription
\begin{equation}
    \label{eq-construction-induced-semigroup}
    T_t^{ ( s ) } \bigl ( \rho^{s/2} X \rho^{ ( 1 - s ) /2} \bigr ) = \rho^{s/2} \cT_t ( X ) \rho^{ ( 1 - s ) /2},
\end{equation}
for $ X \in \cB ( \Gamma ( \C^d ) ) $ and $ t \geq 0 $, extends to a strongly continuous contraction semigroup on $ \cB_2 ( \Gamma ( \C^d ) ) $; see \cite[Theorem~2.3]{carbone2000exponential}. We call $ ( T_t^{ ( s ) } ) _{t \geq 0} $ the \emph{induced semigroup} and denote its generator by $ L^{ ( s ) } $. The parameters $ s = 0 $, $ s = 1/2 $, and $ s = 1 $ give the GNS, KMS, and anti-GNS embeddings, respectively. The same construction can be formulated in the standard form of a $ \sigma $-finite von Neumann algebra. More general embeddings are obtained from inner products associated with operator monotone functions; see \cite[Section~2.3 and Proposition~3.5]{Wirth2026}.

The second-quantization approach to quasi-free quantum dynamics goes back to the pioneering work of Evans and Lewis. Their 1977 monograph constructs quasi-free completely positive maps and their contraction implementations in cyclic representations. In the Fock representation, they relate the implementation to second quantization \cite[Chapter~10]{EvansLewis1977}. This early connection between quasi-free maps and one-particle dynamics is the point of departure for the representation studied in the present paper.

Subsequent work developed second-quantization methods in several directions. Araki, Sinha, and Sunder characterized boundedness, compactness, and Schatten-class membership of second-quantized operators, and proved continuity on their natural Schatten domains \cite[Theorems~3.3 and~3.10]{ArakiSinhaSunder1995}. In classical Ornstein--Uhlenbeck theory, the representation for nonsymmetric semigroups obtained by Chojnowska-Michalik and Goldys in \cite{ChojnowskaMichalikGoldys1996} is recalled in \cite[Lemma~2.2]{ChojnowskaMichalikGoldys2002}. Van Neerven developed the corresponding spectral theory \cite{vanNeerven2005}. Spectral descriptions of particular quantum Ornstein--Uhlenbeck semigroups were obtained in \cite{cipriani2000spectral,ko2004construction}. More recently, Liu, Wan, and Wu used a finite-mode KMS second-quantization representation in their study of entropy contraction and hypercontractivity \cite[Proposition~2.2]{LiuWanWu2026}. Our purpose is to give a common representation for the full family of $ s $-embeddings and to use it to characterize compactness and its spectral consequences.

The explicit construction first assumes that $ \rho $ is Gaussian, with covariance matrix $ \mathbf{S} $. We prove that there is a unitary $ U_s : \Gamma ( \C^{2d} ) \to \cB_2 ( \Gamma ( \C^d ) ) $ such that
\begin{equation}
    \label{eq-intro-second-quantization}
    U_s^* T_t^{ ( s ) } U_s = \Gamma ( \me^{t \mathbf{Z}_s} ),
    \qquad
    \mathbf{Z}_s = \mathbf{S}_s^{1/2} \mathbf{Z} \mathbf{S}_s^{ - 1/2},
\end{equation}
where $ \mathbf{Z} $ is the real matrix representing the drift $ Z $ in \eqref{eq-gaussian-action} and
\begin{equation}
    \label{eq-definition-F-s}
    \mathbf{S}_s := ( \mathbf{S} + \mi \mathbf{J} ) \#_s ( \mathbf{S} - \mi \mathbf{J} )
\end{equation}
is the \emph{$ s $-weighted covariance matrix} associated with $ \rho $. Here $ \mathbf{J} $ is the matrix of the symplectic form in \eqref{eq-real-conventions}. For positive definite matrices $ A $ and $ B $, their $ s $-weighted geometric mean is
\begin{equation}
    \label{eq-geometric-mean}
    A \#_s B := A^{1/2} \bigl ( A^{ - 1/2} B A^{ - 1/2} \bigr ) ^s A^{1/2},
    \qquad 0 \leq s \leq 1.
\end{equation}
Faithfulness makes both matrices in \eqref{eq-definition-F-s} positive definite. Theorem~\ref{theorem-all-s-representation} establishes \eqref{eq-intro-second-quantization} for every embedding, and Corollary~\ref{corollary-generator-domain} identifies the closed generator, its domain and a finite-particle core. Neither drift stability nor uniqueness of the invariant state is required. If the faithful normal invariant state is not Gaussian, Proposition~\ref{proposition-state-independence} provides a faithful Gaussian reference state through the invariant-state decomposition.

Girotti and Poletti established the equivalence between stability of $ \mathbf{Z} $ and uniqueness of the normal invariant state in \cite[Theorems~11 and~13]{girotti2025gaussian}. Theorem~\ref{theorem-main} connects these conditions with compactness of the induced semigroups: they are equivalent to compact resolvent for one induced generator, compactness of one induced operator at a positive time, and immediate trace-class membership for every embedding. Compactness for a single embedding therefore implies compactness for all embeddings. Under these equivalent conditions, Corollary~\ref{corollary-traces} gives exact trace-norm and Hilbert--Schmidt norm formulas and the embedding-independent trace identity \eqref{eq-all-s-trace}. Corollary~\ref{corollary-spectrum} determines the full generator spectrum and its algebraic multiplicities, without assuming that the drift is diagonalizable.

We also revisit the relation between a positive spectral gap and compact resolvent, motivated by the one-mode result in \cite[Proposition~4.8]{fagnola_li_2025spectral}. We measure the coercive spectral gap, defined in \eqref{eq-definition-coercive-gap}, on the orthogonal complement of the full generator kernel. Under drift stability, Proposition~\ref{proposition-gap-formula} identifies the gap with the least eigenvalue of the dissipative part of the transformed drift. For a positive gap, Proposition~\ref{proposition-gap-resolvent} gives a direct finite-mode proof of compact resolvent through estimates on the particle subspaces, with an explicit approximation bound. Example~\ref{example-coupled-modes} shows that compact resolvent does not imply a positive gap, even when the invariant state is unique. Example~\ref{example-gap-infinite-kernel} shows why uniqueness cannot be omitted from the positive-gap implication: it has nonunique faithful normal invariant states and a positive gap off the full kernel, but noncompact generator resolvents.

The paper is organized as follows. Section~\ref{section-preliminaries} fixes the Gaussian framework. Section~\ref{section-second-quantization} constructs the second-quantization representation and identifies the induced generator, its domain and a core. Section~\ref{section-proof-main} proves the compactness criterion and derives its norm, trace and spectral consequences. Section~\ref{section-gaps-compactness} studies the relation between spectral gaps and compact resolvents, with a quantitative proof under uniqueness and examples showing the limits of this implication.

\section{Gaussian framework and induced semigroups} \label{section-preliminaries}

We recall the Gaussian framework needed for the second-quantization construction, following the notation and conventions of \cite{agredo2022decoherence,fagnola2025spectral}.

\subsection{Gaussian states and semigroups} \label{subsection-gaussian}

We use $ \N = \{ 0, 1, 2, \ldots \} $ and scalar products linear in the second variable. For a complex Hilbert space $ E $, let $ E^{ \odot n} $ denote the closed subspace of $ E^{ \otimes n} $ consisting of tensors invariant under permutations of the factors, with $ E^{ \odot 0} = \C $. The symmetric Fock space is
\begin{equation*}
    \Gamma ( E ) = \bigoplus_{n = 0}^\infty E^{ \odot n}.
\end{equation*}
We call $ E^{ \odot n} $ the \emph{$ n $-particle subspace}. A vector $ f \in \Gamma ( E ) $ is an orthogonal sum $ f = \bigoplus_{n = 0}^\infty f_n $, where $ f_n \in E^{ \odot n} $ and $ \norm{f}^2 = \sum_{n = 0}^\infty \norm{f_n}^2 < \infty $. Thus $ f_n $ is the $ n $-particle component of $ f $. We identify each $ E^{ \odot n} $ with the subspace of Fock space whose other particle components are zero. Vectors with only finitely many nonzero particle components are called \emph{finite-particle vectors}.

We use the symmetric tensor construction of \cite[Sections~17 and~19]{parthasarathy1992quantum}. For vectors or symmetric tensors, $ \odot $ denotes the orthogonal symmetrization of their tensor product over all underlying one-particle factors. The exponential vectors are
\begin{equation}
    \label{eq-exponential-vectors}
    e ( \xi ) = \bigoplus_{n = 0}^\infty \frac{\xi^{ \otimes n}}{\sqrt{n!}}, \qquad
    \langle e ( \xi ), e ( \eta ) \rangle = \me^{\langle \xi, \eta \rangle}, \qquad \xi, \eta \in E.
\end{equation}
The vector $ e ( 0 ) = 1 \oplus 0 \oplus \cdots $ is the unit vacuum. The linear span of the exponential vectors is dense in $ \Gamma ( E ) $. For finite-dimensional $ E $, the map $ \xi \mapsto e ( \xi ) $ is entire, as follows from the locally norm-convergent series in \eqref{eq-exponential-vectors}.

Let $ u_1, \ldots, u_d $ be the standard orthonormal basis of $ \C^d $. For $ n_1, \ldots, n_d \in \N $ with $ n = n_1 + \cdots + n_d \geq 1 $, define the normalized occupation-number vector by
\begin{equation*}
    \ket{n_1, \ldots, n_d} := \sqrt{\frac{n!}{n_1! \cdots n_d!}} \, u_1^{ \odot n_1} \odot \cdots \odot u_d^{ \odot n_d} \in ( \C^d ) ^{ \odot n}.
\end{equation*}
Factors with $ n_j = 0 $ are omitted, and the symmetrization is over all $ n $ one-particle factors. Set $ \ket{0, \ldots, 0} := e ( 0 ) $. Thus $ \ket{n_1, \ldots, n_d} $ has $ n_j $ particles in mode $ u_j $ and is regarded as a Fock vector with only its $ n $-particle component nonzero. These vectors form an orthonormal basis, and their finite linear span is the finite-particle space. Under the canonical mode decomposition $ \Gamma ( \C^d ) \simeq \Gamma ( \C ) ^{ \otimes d} $, they correspond to $ \ket{n_1} \otimes \cdots \otimes \ket{n_d} $. We use ket notation for this basis and $ e ( \xi ) $ for exponential vectors. For vectors $ u $ and $ v $, rank-one notation means $ \ketbra{u}{v} w = u \langle v, w \rangle $; in particular, $ \ketbra{n_1, \ldots, n_d}{n_1, \ldots, n_d} $ projects onto the corresponding number vector.

The annihilation and creation operators on mode $ j $ are the closures of the following actions on the finite-particle span:
\begin{equation*}
    \begin{aligned}
        a_j \ket{n_1, \ldots, n_d} & = \sqrt{n_j} \, \ket{n_1, \ldots, n_j - 1, \ldots, n_d},\\
        a_j^\dagger \ket{n_1, \ldots, n_d} & = \sqrt{n_j + 1} \, \ket{n_1, \ldots, n_j + 1, \ldots, n_d}.
    \end{aligned}
\end{equation*}
The first expression is zero when $ n_j = 0 $. These closed operators satisfy $ a_j^\dagger = a_j^* $ and have the finite-particle span as a common invariant core. The number operator $ N_j = a_j^\dagger a_j $ satisfies $ N_j \ket{n_1, \ldots, n_d} = n_j \ket{n_1, \ldots, n_d} $. In one mode we omit the mode subscript.

For $ z \in \C^d $, the Weyl operator is
\begin{equation*}
    W ( z ) = \exp{ \sum_{j = 1}^d ( z_j a_j^\dagger - \overline{z_j} a_j ) } .
\end{equation*}
Here the exponent denotes the skew-adjoint closure of the displayed action on the finite-particle span.
The Weyl relations are
\begin{equation}
    \label{eq-weyl-relations}
    W ( z ) W ( w ) = \me^{ - \mi \Im \langle z, w \rangle} W ( z + w ),
    \qquad W ( z ) ^* = W ( - z ) .
\end{equation}
The linear span of the Weyl operators is $ \sigma $-weakly dense in $ \cB ( \Gamma ( \C^d ) ) $.

We regard $ \C^d $ as a real Hilbert space with scalar product $ \Re \langle \cdot, \cdot \rangle $ and identify it isometrically with $ \R^{2d} $ through the grouped coordinates
\begin{equation}
    \label{eq-real-conventions}
    \mathbf{z} := \begin{bmatrix} \Re z \\ \Im z \end{bmatrix},
    \qquad \mathbf{J} := \begin{bmatrix} 0 & \1_d \\ - \1_d & 0 \end{bmatrix},
    \qquad \Im \langle z, w \rangle = \mathbf{z}^T \mathbf{J} \mathbf{w}.
\end{equation}
Thus $ \mathbf{y}^T \mathbf{z} = \Re \langle y, z \rangle $, and $ \mathbf{J} $ represents multiplication by $ - \mi $. Every real-linear operator $ A : \C^d \to \C^d $ has a unique decomposition
\begin{equation*}
    A z = A_1 z + A_2 \overline{z}, \qquad A_1, A_2 \in \C^{d \times d}.
\end{equation*}
Its real matrix representation is
\begin{equation}
    \label{eq-real-matrix-representation}
    \mathbf{A} := \begin{bmatrix}
        \Re A_1 + \Re A_2 & \Im A_2 - \Im A_1 \\
        \Im A_1 + \Im A_2 & \Re A_1 - \Re A_2
    \end{bmatrix}.
\end{equation}
It sends the grouped real coordinates of $ z $ to those of $ A z $. Conversely, every real $ 2d \times 2d $ matrix represents a unique real-linear operator in these coordinates. The adjoint $ A^\sharp $ is defined by
\begin{equation*}
    \Re \langle y, A z \rangle = \Re \langle A^\sharp y, z \rangle, \qquad y, z \in \C^d,
\end{equation*}
and is represented by $ \mathbf{A}^T $. The representation preserves compositions, powers and exponentials; in particular, $ \me^{t \mathbf{A}} $ represents $ \me^{tA} $. When a real matrix acts on $ \C^{2d} $ below, we use its complex-linear extension. For vectors, $ \abs{ \cdot } $ denotes the Euclidean norm.

Write $ \cB_1 ( \Gamma ( \C^d ) ) $ and $ \cB_2 ( \Gamma ( \C^d ) ) $ for the trace-class and Hilbert--Schmidt operators on $ \Gamma ( \C^d ) $. We identify a normal state with its positive density operator $ \rho \in \cB_1 ( \Gamma ( \C^d ) ) $, with $ \Tr \rho = 1 $. It is faithful if $ \ker \rho = \{ 0 \} $. Its characteristic function is $ \chi_\rho ( z ) = \Tr ( \rho W ( z ) ) $. The state is Gaussian if there exist $ \omega \in \C^d $ and a symmetric real-linear covariance operator $ S $ such that
\begin{equation} \label{eq-gaussian-state}
    \chi_\rho ( z )
    = \exp{ 2 \mi \Im \langle \omega, z \rangle - \frac{1}{2} \Re \langle z, S z \rangle } = \exp{ 2 \mi \boldsymbol{\omega}^T \mathbf{J} \mathbf{z} - \frac{1}{2} \mathbf{z}^T \mathbf{S} \mathbf{z} },
\end{equation}
where $ \boldsymbol{\omega} = ( \Re \omega, \Im \omega ) ^T $ and $ \mathbf{S} $ is the real matrix representing $ S $. The vector $ \omega $ and the matrix $ \mathbf{S} $ are called the mean vector and covariance matrix, respectively. The covariance condition is $ \mathbf{S} + \mi \mathbf{J} \geq 0 $, and the Gaussian state is faithful precisely when $ \mathbf{S} + \mi \mathbf{J} > 0 $; see \cite[Section~2]{fagnola2025spectral}. Its characteristic function never vanishes. We allow a nonzero mean throughout.

On one mode, the faithful thermal density with inverse temperature $ \beta > 0 $ is
\begin{equation}
    \label{eq-thermal-density}
    \rho_{\beta} = ( 1 - \me^{ - \beta} ) \me^{ - \beta N}, \qquad N = a^\dagger a.
\end{equation}
This is the thermal-state formula in \cite[Section~4, Eq.~(4.1)]{parthasarathy2021pedagogical}.

A QMS on $ \cB ( \Gamma ( \C^d ) ) $ is a semigroup of normal unital completely positive maps, with $ \cT_0 = \1 $ and $ t \mapsto \cT_t ( X ) $ continuous in the $ \sigma $-weak topology for every $ X $. A density $ \rho $ is invariant if $ \Tr ( \rho \cT_t ( X ) ) = \Tr ( \rho X ) $ for every $ X $ and $ t \geq 0 $. A Gaussian QMS acts on Weyl operators according to
\begin{equation}
    \label{eq-gaussian-action}
    \cT_t ( W ( z ) ) = \phi_t ( z ) W ( \me^{tZ} z ),
\end{equation}
where
\begin{equation}
    \label{eq-gaussian-multiplier}
    \phi_t ( z ) = \exp{ - \frac{1}{2} \int_0^t \Re \langle \me^{rZ} z, C \me^{rZ} z \rangle \, \md r + \mi \int_0^t \Re \langle \zeta, \me^{rZ} z \rangle \, \md r } .
\end{equation}
Here $ Z $ and $ C $ are real-linear operators, $ C = C^\sharp \geq 0 $, and $ \zeta \in \C^d $. They are called the \emph{drift}, \emph{diffusion}, and \emph{displacement} parameters, respectively. Their real matrix representations are $ \mathbf{Z} $ and $ \mathbf{C} $. The drift matrix $ \mathbf{Z} $ is called \emph{stable} if every eigenvalue has strictly negative real part. Since the drift space is finite dimensional, stability is equivalent to $ \norm{\me^{t \mathbf{Z}}} \to 0 $ as $ t \to \infty $.

For the induced semigroups, compactness and trace-class membership refer to operators on the Hilbert--Schmidt space
\begin{equation} \label{eq-definition-mathcal-H}
    \cH := \cB_2 ( \Gamma ( \C^d ) ),
\end{equation}
whose scalar product is $ \langle Y_1, Y_2 \rangle_2 = \Tr ( Y_1^* Y_2 ) $. We write $ \cS_1 ( \cH ) $ and $ \cS_2 ( \cH ) $ for the trace-class and Hilbert--Schmidt operators on $ \cH $, with norms $ \norm{ \cdot }_{\cS_1} $ and $ \norm{ \cdot }_{\cS_2} $, respectively. These classes concern operators on $ \cH $, whereas $ \cB_1 ( \Gamma ( \C^d ) ) $ and $ \cB_2 ( \Gamma ( \C^d ) ) $ concern operators on the original Fock space. For a bounded operator $ B $, its modulus is $ \abs{B} := ( B^* B ) ^{1/2} $. We write $ \sigma ( A ) $ for the spectrum of a closed operator $ A $. The symbol $ \1_E $ denotes the identity map on a space $ E $, with the subscript omitted when clear; $ \1_m $ denotes the identity on $ \C^m $. The operator trace on $ \cH $ is denoted by $ \tr_{\cH} $, and an unsubscripted operator norm refers to the bounded-operator norm on the space under consideration. We use the standard ideal properties and Schatten H\"{o}lder inequalities; see \cite[Theorems~2.7(a) and~2.8]{Simon2005}.

\section{Second quantization for all \texorpdfstring{$ s $}{s}-embeddings}\label{section-second-quantization}

For a faithful Gaussian invariant state, we construct the second-quantization representation by identifying the embedded Weyl operators, after normalization by the characteristic function, with exponential vectors. The weighted covariance matrix determines their scalar products. We then identify the closed generator and its domain, before extending the representation to arbitrary faithful normal invariant states.

\subsection{Weighted covariance matrices}\label{subsection-metric}

For a faithful Gaussian state with covariance $ \mathbf{S} $, the $ s $-weighted covariance matrix $ \mathbf{S}_s $ in \eqref{eq-definition-F-s} is positive definite. We compute it in Williamson coordinates to prepare the scalar-product formula in Subsection~\ref{subsection-gram}. Invariance is not required for these calculations.

We use the Williamson decomposition with inverse temperatures $ \beta_1, \ldots, \beta_d > 0 $ and a real symplectic matrix $ \mathbf{G} $. We first collect the diagonal matrices needed below. For $ 0 \leq s \leq 1 $ and $ 1 \leq j \leq d $, set
\begin{equation}
    \label{eq-metric-coefficients}
    c_{s, j} = \frac{\cosh ( ( 1 - 2 s ) \beta_j/2 ) }{\sinh ( \beta_j/2 ) }, \qquad
    d_{s, j} = \frac{\sinh ( ( 1 - 2 s ) \beta_j/2 ) }{\sinh ( \beta_j/2 ) }.
\end{equation}
The corresponding diagonal matrices are
\begin{align}
    \label{eq-thermal-diagonal-matrices}
    D_{\coth} &= \diag_{1 \leq j \leq d} ( \coth ( \beta_j/2 ) ), \qquad
    D_{\csch} = \diag_{1 \leq j \leq d} ( \csch ( \beta_j/2 ) ), \\
    \label{eq-weighted-diagonal-matrices}
    D_{c_s} &= \diag ( c_{s, 1}, \ldots, c_{s, d} ), \qquad
    D_{d_s} = \diag ( d_{s, 1}, \ldots, d_{s, d} ).
\end{align}
The Williamson decomposition is
\begin{equation}
    \label{eq-williamson}
    \mathbf{S} = \mathbf{G}^T \mathbf{D}_{\coth} \mathbf{G}, \qquad
    \mathbf{G}^T \mathbf{J} \mathbf{G} = \mathbf{J}, \qquad
    \mathbf{D}_{\coth} = \begin{bmatrix}D_{\coth} & 0\\0 & D_{\coth} \end{bmatrix}.
\end{equation}
The corresponding density has the thermal product representation
\begin{equation}
    \label{eq-thermal-product}
    \rho = W ( \omega ) B ( \mathbf{G}^{ - 1} ) \rho_{\boldsymbol{\beta}} B ( \mathbf{G}^{ - 1} ) ^* W ( \omega ) ^*, \quad \rho_{\boldsymbol{\beta}} := \bigotimes_{j = 1}^d \rho_{\beta_j}.
\end{equation}
Each factor is the one-mode density in \eqref{eq-thermal-density}. The unitary Bogoliubov implementer satisfies
\begin{equation*}
    B ( \mathbf{G}^{ - 1} ) ^* W ( z ) B ( \mathbf{G}^{ - 1} ) = W ( Gz ),
\end{equation*}
where $ G $ is the real-linear map represented by $ \mathbf{G} $. We refer to \cite[Sections~3 and~4]{parthasarathy2021pedagogical} for these facts.

\begin{lemma}\label{lemma-explicit-metric}
For the diagonal matrices in \eqref{eq-thermal-diagonal-matrices} and \eqref{eq-weighted-diagonal-matrices}, and the symplectic matrix $ \mathbf{G} $ in \eqref{eq-williamson}, we have
\begin{equation}
    \label{eq-williamson-F-s}
    \mathbf{S}_s = \mathbf{G}^T
    \begin{bmatrix}D_{c_s} & \mi D_{d_s}\\-\mi D_{d_s} & D_{c_s} \end{bmatrix}
    \mathbf{G}.
\end{equation}
In particular,
\begin{equation}
    \label{eq-special-metrics}
    \mathbf{S}_0 = \mathbf{S} + \mi \mathbf{J}, \qquad \mathbf{S}_1 = \mathbf{S} - \mi \mathbf{J}, \qquad
    \mathbf{S}_{1/2} = \mathbf{G}^T \begin{bmatrix}D_{\csch} & 0\\0 & D_{\csch} \end{bmatrix}\mathbf{G}.
\end{equation}
Moreover, $ \mathbf{S}_{1 - s} = \overline{\mathbf{S}_s} $.
\end{lemma}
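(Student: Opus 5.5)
The plan is to use congruence invariance of the weighted geometric mean, which reduces everything to the thermal diagonal case. For any invertible real (or complex) matrix $\mathbf{M}$ and positive definite $A,B$ one has
\[
(\mathbf{M}^* A\mathbf{M})\#_s(\mathbf{M}^* B\mathbf{M}) = \mathbf{M}^*(A\#_s B)\mathbf{M}.
\]
This is the standard transformer identity. It follows from the characterization of $A\#_s B$ as $A^{1/2}(A^{-1/2}BA^{-1/2})^sA^{1/2}$. Equivalently, write $A\#_sB = A\,f(A^{-1}B)$ with $f(x)=x^s$ defined by holomorphic functional calculus on the positive spectrum of $A^{-1}B$; then $(\mathbf{M}^*A\mathbf{M})^{-1}(\mathbf{M}^*B\mathbf{M}) = \mathbf{M}^{-1}(A^{-1}B)\mathbf{M}$, and conjugation commutes with functional calculus.

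Since $\mathbf{G}$ is real symplectic, $\mathbf{G}^T\mathbf{J}\mathbf{G}=\mathbf{J}$ gives
\[
\mathbf{S}\pm\mi\mathbf{J} = \mathbf{G}^T(\mathbf{D}_{\coth}\pm\mi\mathbf{J})\mathbf{G}.
\]
It therefore suffices to compute $(\mathbf{D}_{\coth}+\mi\mathbf{J})\#_s(\mathbf{D}_{\coth}-\mi\mathbf{J})$.

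After permuting coordinates, this matrix splits as a direct sum over modes $j$ of $2\times2$ blocks. With $c=\coth(\beta_j/2)$, the blocks are $A=\begin{bmatrix}c&\mi\\-\mi&c\end{bmatrix}$ and $B=\overline A=A^T$. Both are diagonalized by the same unitary, with eigenvectors $v_\pm=(1,\mp\mi)/\sqrt2$; one checks $A v_\pm=(c\pm1)v_\pm$ and $Bv_\pm=(c\mp1)v_\pm$. Because $A$ and $B$ commute, the mean reduces to $A\#_sB=A^{1-s}B^s$, which has eigenvalues $(c\pm1)^{1-s}(c\mp1)^s$ on $v_\pm$.

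Now substitute $c\pm1 = \me^{\pm\beta_j/2}/\sinh(\beta_j/2)$. The eigenvalues become $\me^{\pm(1-2s)\beta_j/2}/\sinh(\beta_j/2)$. Reassembling with $v_\pm v_\pm^*=\tfrac12\begin{bmatrix}1&\pm\mi\\\mp\mi&1\end{bmatrix}$ gives the block $\begin{bmatrix}c_{s,j}&\mi d_{s,j}\\-\mi d_{s,j}&c_{s,j}\end{bmatrix}$. Undoing the coordinate permutation yields \eqref{eq-williamson-F-s}. The only delicate point is bookkeeping: the sign of the $\mi$ entries must be tracked consistently with $\mathbf{J}$ in grouped coordinates, and I would verify it at $s=0$.

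The special cases then follow directly. At $s=0$ we get $c_{0,j}=\coth(\beta_j/2)$ and $d_{0,j}=1$, which recovers $\mathbf{S}+\mi\mathbf{J}$. At $s=1$ we get $d_{1,j}=-1$, giving $\mathbf{S}-\mi\mathbf{J}$. At $s=1/2$ we get $d=0$ and $c=\csch(\beta_j/2)$.

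Finally, $c_{1-s,j}=c_{s,j}$ and $d_{1-s,j}=-d_{s,j}$, since cosh is even and sinh is odd. Combined with $\mathbf{G}$ being real, this gives $\mathbf{S}_{1-s}=\overline{\mathbf{S}_s}$.

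The main obstacle is justifying the congruence invariance for complex Hermitian $A,B$ under a real congruence $\mathbf{G}^T=\mathbf{G}^*$. I would handle it via the functional-calculus formula $A\#_sB=A(A^{-1}B)^s$, noting that $A^{-1}B$ is similar to the positive matrix $A^{-1/2}BA^{-1/2}$.
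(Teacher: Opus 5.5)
Your proposal is correct and follows essentially the same route as the paper. Both arguments use congruence invariance of $\#_s$ via $A\#_s B = A(A^{-1}B)^s$ and similarity invariance of the functional calculus, reduce through the Williamson form to the commuting pair $\mathbf{D}_{\coth}\pm\mi\mathbf{J}$, and compute the eigenvalues $\me^{\pm(1-2s)\beta_j/2}/\sinh(\beta_j/2) = c_{s,j}\pm d_{s,j}$; your explicit eigenvectors $v_\pm$ and the reassembly into the block with entries $c_{s,j}$ and $\pm\mi d_{s,j}$ spell out, with the correct signs, the step the paper leaves implicit.
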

\begin{proof}
For positive definite matrices $ A $ and $ B $ of the same size and any invertible matrix $ R $, the weighted geometric mean satisfies
\begin{equation*}
    ( R^* A R ) \#_s ( R^* B R ) = R^* ( A \#_s B ) R.
\end{equation*}
Indeed, write $ A \#_s B = A ( A^{ - 1} B ) ^s $, where the power is defined by similarity with the positive matrix $ A^{ - 1/2} B A^{ - 1/2} $. The identity follows from
$ ( R^* A R ) ^{ - 1} ( R^* B R ) = R^{ - 1} ( A^{ - 1} B ) R $ and similarity invariance of the functional calculus.

Applying this identity to \eqref{eq-williamson}, it remains to compute
$ ( \mathbf{D}_{\coth} + \mi \mathbf{J} ) \#_s ( \mathbf{D}_{\coth} - \mi \mathbf{J} ) $. The two matrices commute. On the two-dimensional coordinate subspace corresponding to mode $ j $, the eigenvalues of $ \mi \mathbf{J} $ are $ \pm 1 $, and
\begin{equation*}
    \coth ( \beta_j/2 ) \pm 1 = \frac{\me^{ \pm \beta_j/2}}{\sinh ( \beta_j/2 ) }.
\end{equation*}
The eigenvalues of their weighted geometric mean are thus
$ \me^{ \pm ( 1 - 2 s ) \beta_j/2}/\sinh ( \beta_j/2 ) = c_{s, j} \pm d_{s, j} $, where $ c_{s, j} $ and $ d_{s, j} $ are the coefficients in \eqref{eq-metric-coefficients}. This proves \eqref{eq-williamson-F-s}, the special cases in \eqref{eq-special-metrics}, and the identity $ \mathbf{S}_{1 - s} = \overline{\mathbf{S}_s} $. The same expressions also verify positivity directly.
\end{proof}

\subsection{The weighted Weyl pairing}\label{subsection-gram}

We compute the scalar products of the embedded Weyl operators from the one-mode pairing in \cite[Lemma~5.1]{fagnola_li_2025spectral}. We first record the formula in the normalization used here; the KMS case is also treated in \cite[Appendix~C, Proposition~35]{fagnola2025spectral}.

\begin{lemma}\label{lemma-thermal-trace}
Let $ \rho_{\beta} $ be the one-mode thermal density in \eqref{eq-thermal-density}. For $ z, w \in \C $ and $ 0 \leq s \leq 1 $,
\begin{equation}
    \label{eq-thermal-trace}
    \Tr \bigl ( \rho_{\beta}^{1 - s} W ( - z ) \rho_{\beta}^s W ( w ) \bigr ) = \exp{
        - \frac{1}{2} \coth ( \beta/2 ) ( \abs{z}^2 + \abs{w}^2 )
        + \frac{\me^{ - s \beta} \overline{z} w + \me^{ - ( 1 - s ) \beta} z \overline{w}}{1 - \me^{ - \beta}}
    }.
\end{equation}
\end{lemma}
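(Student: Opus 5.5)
The plan is to compute the trace in a basis where $\rho_\beta$ is diagonal, using coherent states (exponential vectors) to make the Gaussian integrals explicit. Write $q = \me^{-\beta}$, so that $\rho_\beta^s = (1-q)^s q^{sN}$ and $q^{sN} e(\xi) = e(q^s \xi)$. The displaced exponential vectors satisfy $W(w) e(\xi) = \me^{-\abs{w}^2/2 - \overline{w}\xi} e(\xi + w)$. The trace is then computed through the coherent-state resolution of the identity
\begin{equation*}
    \1 = \frac{1}{\pi} \int_{\C} \me^{-\abs{\xi}^2} \ketbra{e(\xi)}{e(\xi)} \, \md^2 \xi .
\end{equation*}
All operators involved are bounded, and $\rho_\beta^{1-s}$ and $\rho_\beta^s$ are trace class for $0 < s < 1$, so the trace is well defined. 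The endpoint cases $s \in \{0,1\}$ follow either by the same computation with a single density factor or by continuity in $s$.

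The computation proceeds in three steps.

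\begin{enumerate}[label=(\roman*)]
    \item Expand the trace as
    \begin{equation*}
        \frac{1}{\pi} \int \me^{-\abs{\xi}^2} \bigl\langle e(\xi), \rho_\beta^{1-s} W(-z) \rho_\beta^s W(w) e(\xi) \bigr\rangle \, \md^2\xi .
    \end{equation*}
    Then act successively with $W(w)$, $q^{sN}$, $W(-z)$, and finally move $q^{(1-s)N}$ onto the bra, where it becomes $e(q^{1-s}\xi)$ by self-adjointness.

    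\item Use $\langle e(\eta), e(\xi') \rangle = \me^{\overline{\eta}\xi'}$. The integrand becomes
    \begin{equation*}
        (1-q) \exp\bigl(-\abs{\xi}^2 + Q(\xi, \overline{\xi}) + \text{linear terms}\bigr)
    \end{equation*}
    times constants in $z, w$. Here the quadratic part is $q\abs{\xi}^2$, since $q^{1-s} q^s = q$.

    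\item Evaluate the complex Gaussian integral
    \begin{equation*}
        \frac{1}{\pi} \int \me^{-(1-q)\abs{\xi}^2 + \overline{\alpha}\xi + \alpha' \overline{\xi}} \, \md^2\xi = \frac{1}{1-q} \, \me^{\overline{\alpha}\alpha'/(1-q)} .
    \end{equation*}
    The prefactor $1/(1-q)$ cancels the normalization $(1-q)^{s}(1-q)^{1-s}$.
\end{enumerate}

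What remains is bookkeeping of the exponent. Collecting terms, one should get $-\tfrac12(\abs{z}^2 + \abs{w}^2)$ from the Weyl normalizations. There is also a cross term $\overline{z} w$-type contribution at the point where $W(-z)$ acts on $e(q^s(\xi + w))$, together with the completed-square contribution $\overline{\alpha}\alpha'/(1-q)$, where $\alpha, \alpha'$ are linear in $z, w$ with powers $q^s, q^{1-s}$. Regrouping, the diagonal terms should combine into
\begin{equation*}
    -\tfrac12 \abs{z}^2 - \tfrac{q}{1-q}\abs{z}^2 = -\tfrac12 \coth(\beta/2)\,\abs{z}^2 ,
\end{equation*}
which uses $\tfrac12 + \tfrac{q}{1-q} = \tfrac12\coth(\beta/2)$, and similarly for $\abs{w}^2$. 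The off-diagonal terms should combine into
\begin{equation*}
    \frac{q^s\,\overline{z}w + q^{1-s}\, z\overline{w}}{1-q} .
\end{equation*}

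The main obstacle is this exponent bookkeeping: signs from $W(-z)$, phases from the Weyl relation \eqref{eq-weyl-relations}, and the correct pairing of $q^s$ versus $q^{1-s}$ with $\overline{z}w$ versus $z\overline{w}$. As a check on the conventions, set $z = w$ and $s = 0$. This gives $\Tr(W(-z)\rho W(z)) = 1$, and indeed $-\coth(\beta/2)\abs{z}^2 + \tfrac{1+q}{1-q}\abs{z}^2 = 0$. Comparing with \cite[Lemma~5.1]{fagnola_li_2025spectral} after matching normalizations would serve as an alternative route or a cross-check. A second check is the symmetry $s \leftrightarrow 1-s$ combined with $z \leftrightarrow \overline{w}$-type conjugation of the trace.
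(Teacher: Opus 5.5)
Your proposal is correct, but it takes a different route from the paper. The paper computes nothing. It imports the one-mode pairing \cite[Lemma~5.1, Eq.~(5.3)]{fagnola_li_2025spectral} with inverse temperature $2\beta$, embedding parameter $1-s$ and Weyl arguments $-z$, $w$. It then uses $\rho_\beta = \sqrt{\tanh(\beta/2)}\,\rho_{2\beta}^{1/2}$ to cancel the prefactor $\sqrt{\coth(\beta/2)}$. This is exactly the cross-check you mention at the end.

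Your coherent-state computation is self-contained and checks the conventions directly: the sign in $W(-z)$, and which of $\me^{-s\beta}$, $\me^{-(1-s)\beta}$ multiplies $\overline{z}w$. The paper's route is shorter, but it is only as reliable as the convention matching with the cited paper.

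The bookkeeping you left hedged does close. With $q=\me^{-\beta}$, the integrand exponent is
\begin{equation*}
-(1-q)\abs{\xi}^2 + (q^s\overline{z}-\overline{w})\,\xi + (qw-q^{1-s}z)\,\overline{\xi} - \tfrac12\bigl(\abs{z}^2+\abs{w}^2\bigr) + q^s\overline{z}w .
\end{equation*}
The Gaussian integral then adds $\bigl(q^{1+s}\overline{z}w + q^{1-s}z\overline{w} - q\abs{z}^2 - q\abs{w}^2\bigr)/(1-q)$. Since $q^s + q^{1+s}/(1-q) = q^s/(1-q)$, this gives exactly \eqref{eq-thermal-trace}.

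Two small points:
\begin{itemize}
\item The endpoints need no continuity argument. For every $s\in[0,1]$, one of $\rho_\beta^{1-s}$, $\rho_\beta^{s}$ is trace class, and the computation is literally the same with $q^{0\cdot N}=\1$.
\item In your sanity check, the $s=0$ trace is $\Tr\bigl(\rho_\beta W(-z)W(z)\bigr)$, not $\Tr\bigl(W(-z)\rho_\beta W(z)\bigr)$; the latter is the $s=1$ case. Both equal $1$, so the check itself stands.
\end{itemize}
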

\begin{proof}
Apply \cite[Lemma~5.1, Eq.~(5.3)]{fagnola_li_2025spectral} with inverse temperature $ 2 \beta $, embedding parameter $ 1 - s $, and Weyl arguments $ - z $ and $ w $. The identity
\begin{equation*}
    \rho_{\beta} = \sqrt{\tanh ( \beta/2 )} \rho_{2 \beta}^{1/2}
\end{equation*}
cancels the prefactor $ \sqrt{\coth ( \beta/2 )} $ in that formula and gives \eqref{eq-thermal-trace}. The cited identity holds for every $ s \in [ 0, 1 ] $, including the endpoints.
\end{proof}

\begin{proposition}\label{proposition-all-s-gram}
Let $ \rho $ be a faithful Gaussian state, not necessarily invariant. For every $ s \in [ 0, 1 ] $ and $ z, w \in \C^d $,
\begin{equation}
    \label{eq-all-s-gram}
    \left\langle \rho^{s/2} W ( z ) \rho^{ ( 1 - s ) /2},
    \rho^{s/2} W ( w ) \rho^{ ( 1 - s ) /2} \right\rangle_2 = \overline{\chi_\rho ( z )} \chi_\rho ( w )
    \exp{ \mathbf{z}^T \mathbf{S}_s \mathbf{w} }.
\end{equation}
\end{proposition}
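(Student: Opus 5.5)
We reduce the statement to the one-mode thermal identity of Lemma~\ref{lemma-thermal-trace} by cyclicity of the trace and the thermal product representation \eqref{eq-thermal-product}. Write the left side of \eqref{eq-all-s-gram} as
\begin{equation*}
    \Tr \bigl ( \rho^{(1-s)/2} W(z)^* \rho^{s/2} \rho^{s/2} W(w) \rho^{(1-s)/2} \bigr ) = \Tr \bigl ( \rho^{1-s} W(-z) \rho^{s} W(w) \bigr ),
\end{equation*}
using $ W(z)^* = W(-z) $ and cyclicity. This is legitimate because $ \rho^{1-s} $ and $ \rho^s $ are bounded and at least one of them is trace class (for $ 0<s<1 $ both are trace class since $ \rho $ has geometric spectrum; at the endpoints one factor is $ \1 $ and the other is $ \rho $). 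Next insert \eqref{eq-thermal-product}: with the unitary $ V := W(\omega) B(\mathbf{G}^{-1}) $ we have $ \rho^{r} = V \rho_{\boldsymbol\beta}^{r} V^* $ for every $ r \geq 0 $. The Weyl relations \eqref{eq-weyl-relations} give $ W(\omega)^* W(w) W(\omega) = \me^{-2\mi \Im\langle \omega, w\rangle} W(w) $, and the Bogoliubov rule gives $ B(\mathbf{G}^{-1})^* W(w) B(\mathbf{G}^{-1}) = W(Gw) $. Hence the trace equals $ \me^{2\mi \Im\langle\omega, -z\rangle}\me^{-2\mi\Im\langle\omega,w\rangle} $ (I will fix the signs by comparing with $ \chi_\rho $) times $ \Tr ( \rho_{\boldsymbol\beta}^{1-s} W(-Gz) \rho_{\boldsymbol\beta}^s W(Gw) ) $.

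The product structure $ \rho_{\boldsymbol\beta} = \bigotimes_j \rho_{\beta_j} $ and $ W(x) = \bigotimes_j W(x_j) $ factorize the remaining trace into one-mode traces, each given by Lemma~\ref{lemma-thermal-trace} with $ x = Gz $, $ y = Gw $. In real coordinates $ \mathbf{x} = \mathbf{G}\mathbf{z} $, $ \mathbf{y} = \mathbf{G}\mathbf{w} $, the quadratic part $ -\frac12\coth(\beta_j/2)(\abs{x_j}^2+\abs{y_j}^2) $ sums to $ -\frac12 (\mathbf{z}^T\mathbf{S}\mathbf{z} + \mathbf{w}^T\mathbf{S}\mathbf{w}) $ by \eqref{eq-williamson}. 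Together with the mean phases this reproduces exactly $ \overline{\chi_\rho(z)}\chi_\rho(w) $, since $ \overline{\chi_\rho(z)} = \chi_\rho(-z) $ by \eqref{eq-gaussian-state}.

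The main computational step is the cross term. Write $ \me^{-s\beta}\bar x y + \me^{-(1-s)\beta} x\bar y $ over $ 1-\me^{-\beta} $, multiply numerator and denominator by $ \me^{\beta/2} $, and split $ \bar x y = \Re(\bar x y) + \mi \Im(\bar x y) $. This gives
\begin{equation*}
    \frac{\me^{(1-2s)\beta/2}\bar x y + \me^{-(1-2s)\beta/2} x \bar y}{2\sinh(\beta/2)} = c_{s}\,\Re(\bar x y) + \mi\, d_{s}\,\Im(\bar x y),
\end{equation*}
with the coefficients of \eqref{eq-metric-coefficients}. In the grouped coordinates, $ \Re(\bar x y) = \mathbf{x}^T\mathbf{y} $ and $ \Im(\bar x y) = \mathbf{x}^T\mathbf{J}\mathbf{y} $ by \eqref{eq-real-conventions}, so the summed cross term is $ \mathbf{x}^T ( \mathbf{D}_{c_s} + \mi \mathbf{D}_{d_s}\mathbf{J} ) \mathbf{y} $. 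The matrix $ \mathbf{D}_{d_s}\mathbf{J} $ has blocks $ \begin{bmatrix}0 & D_{d_s}\\ -D_{d_s} & 0\end{bmatrix} $, which matches \eqref{eq-williamson-F-s} after multiplying by $ \mi $. Lemma~\ref{lemma-explicit-metric} then yields $ \mathbf{z}^T\mathbf{G}^T(\cdots)\mathbf{G}\mathbf{w} = \mathbf{z}^T\mathbf{S}_s\mathbf{w} $.

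The expected obstacle is bookkeeping of signs and conventions: the orientation of $ \mathbf{J} $ versus $ \Im\langle\cdot,\cdot\rangle $, and whether the displacement phases cancel into $ \overline{\chi_\rho(z)}\chi_\rho(w) $ rather than leaving a residual phase. I will check both against the special case $ s=0 $, where the formula must reduce to $ \Tr(\rho W(-z)W(w)) $. By the Weyl relations this equals $ \me^{\mi\Im\langle z,w\rangle}\chi_\rho(w-z) $, and expanding it should give the factor $ \exp(\mathbf{z}^T(\mathbf{S}+\mi\mathbf{J})\mathbf{w}) $, consistent with $ \mathbf{S}_0 = \mathbf{S}+\mi\mathbf{J} $.
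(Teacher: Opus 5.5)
Your proposal is correct and follows the paper's proof essentially step for step: cyclicity to reach $\Tr(\rho^{1-s}W(-z)\rho^{s}W(w))$, conjugation by $W(\omega)B(\mathbf{G}^{-1})$, factorization over the thermal modes via Lemma~\ref{lemma-thermal-trace}, rewriting the cross term as $c_{s,j}\Re\langle x_j,y_j\rangle+\mi\, d_{s,j}\Im\langle x_j,y_j\rangle$, and Lemma~\ref{lemma-explicit-metric}. The one slip is the displacement phase: the Weyl relations give $W(\omega)^*W(x)W(\omega)=\me^{2\mi\Im\langle\omega,x\rangle}W(x)$, so the trace picks up $\me^{2\mi\Im\langle\omega,w-z\rangle}$, whereas your displayed product equals $\me^{-2\mi\Im\langle\omega,z+w\rangle}$; as you anticipate, this is harmless, because the same conjugation gives $\chi_\rho(x)=\me^{2\mi\Im\langle\omega,x\rangle}\chi_{\rho_{\boldsymbol{\beta}}}(Gx)$, and the phase therefore cancels against $\chi_\rho(-z)\chi_\rho(w)=\overline{\chi_\rho(z)}\chi_\rho(w)$.
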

\begin{proof}
Cyclicity of the trace gives
\begin{equation*}
    \left\langle \rho^{s/2} W ( z ) \rho^{ ( 1 - s ) /2},
    \rho^{s/2} W ( w ) \rho^{ ( 1 - s ) /2} \right\rangle_2  = \Tr \bigl ( \rho^{1 - s} W ( - z ) \rho^s W ( w ) \bigr ).
\end{equation*}
Schatten H\"{o}lder's inequality justifies the trace products and cyclic permutations, including $ s = 0 $ and $ s = 1 $. For one thermal mode with density $ \rho_{\beta} $, divide by $ \overline{\chi_{\rho_{\beta}} ( z )} \chi_{\rho_{\beta}} ( w ) $. The two diagonal terms in Lemma~\ref{lemma-thermal-trace} cancel, leaving the exponent
\begin{equation*}
    \frac{\me^{ - s \beta} \overline{z} w + \me^{ - ( 1 - s ) \beta} z \overline{w}}{1 - \me^{ - \beta}}
    = c_{s, 1} \Re \langle z, w \rangle + \mi d_{s, 1} \Im \langle z, w \rangle,
\end{equation*}
where the coefficients in \eqref{eq-metric-coefficients} are evaluated at $ \beta_1 = \beta $. The $ s $-weighted covariance matrix is $ c_{s, 1} \1_2 + \mi d_{s, 1} \mathbf{J} $. Taking tensor products gives the block matrix in \eqref{eq-williamson-F-s} for the thermal product state $ \rho_{\boldsymbol{\beta}} $ in \eqref{eq-thermal-product}.

For a general Gaussian state, use the thermal product representation \eqref{eq-thermal-product}. The Weyl relations and the Bogoliubov conjugation give
\begin{equation*}
    B ( \mathbf{G}^{ - 1} ) ^* W ( \omega ) ^* W ( z ) W ( \omega ) B ( \mathbf{G}^{ - 1} )
    = \me^{2 \mi \Im \langle \omega, z \rangle} W ( Gz ).
\end{equation*}
Hence
\begin{equation*}
    \chi_\rho ( z ) = \me^{2 \mi \Im \langle \omega, z \rangle} \chi_{\rho_{\boldsymbol{\beta}}} ( Gz ).
\end{equation*}
Conjugating the weighted trace by $ W ( \omega ) B ( \mathbf{G}^{ - 1} ) $, the Weyl factors $ W ( - z ) $ and $ W ( w ) $ contribute the phases $ \me^{ - 2 \mi \Im \langle \omega, z \rangle} $ and $ \me^{2 \mi \Im \langle \omega, w \rangle} $, respectively. Their product is $ \me^{2 \mi \Im \langle \omega, w - z \rangle} $, which is also the phase in
\begin{equation*}
    \overline{\chi_\rho ( z )} \chi_\rho ( w )
    = \me^{2 \mi \Im \langle \omega, w - z \rangle}
    \overline{\chi_{\rho_{\boldsymbol{\beta}}} ( Gz )} \chi_{\rho_{\boldsymbol{\beta}}} ( Gw ).
\end{equation*}
Thus unitary invariance of the trace and cancellation of this common factor give
\begin{equation*}
    \frac{\Tr \bigl ( \rho^{1 - s} W ( - z ) \rho^s W ( w ) \bigr )}{\overline{\chi_\rho ( z )} \chi_\rho ( w )}
    =
    \frac{\Tr \bigl ( \rho_{\boldsymbol{\beta}}^{1 - s} W ( - Gz ) \rho_{\boldsymbol{\beta}}^s W ( Gw ) \bigr )}{\overline{\chi_{\rho_{\boldsymbol{\beta}}} ( Gz )} \chi_{\rho_{\boldsymbol{\beta}}} ( Gw )}.
\end{equation*}
The thermal calculation applies to the right-hand side. Lemma~\ref{lemma-explicit-metric} identifies the resulting exponent as $ \mathbf{z}^T \mathbf{S}_s \mathbf{w} $, proving \eqref{eq-all-s-gram}.
\end{proof}

\subsection{Unitary representation and generator}\label{subsection-unitary}

For a matrix $ B $ on $ \C^{2d} $, define $ B^{ \odot n} := \left. B^{ \otimes n} \right|_{( \C^{2d} ) ^{ \odot n}} $ for $ n \geq 1 $, and $ B^{ \odot 0} := \1_{\C} $. The restriction is well defined because $ B^{ \otimes n} $ commutes with permutations of the tensor factors and hence preserves the symmetric subspace. For a contraction $ B $, its symmetric second quantization is
\begin{equation}
    \label{eq-definition-second-quantization}
    \Gamma ( B ) = \bigoplus_{n = 0}^\infty B^{ \odot n}, \qquad
    \Gamma ( B ) e ( \xi ) = e ( B \xi ),
\end{equation}
Here $ e ( \xi ) $ is the exponential vector in \eqref{eq-exponential-vectors}. The auxiliary Fock space $ \Gamma ( \C^{2d} ) $ has a $ 2d $-dimensional one-particle space. The following theorem identifies it with the Hilbert--Schmidt space $ \cH $.

\begin{theorem}[Second quantization for every embedding]\label{theorem-all-s-representation}
Suppose that $ \rho $ is a faithful Gaussian invariant state. For every $ s \in [ 0, 1 ] $ there is a unitary $ U_s : \Gamma ( \C^{2d} ) \to \cH $ determined by
\begin{equation}
    \label{eq-all-s-unitary}
    U_s \, e ( \mathbf{S}_s^{1/2} \mathbf{z} ) = \chi_\rho ( z ) ^{ - 1} \rho^{s/2} W ( z ) \rho^{ ( 1 - s ) /2}, \qquad z \in \C^d.
\end{equation}
The transformed drift $ \mathbf{Z}_s $ in \eqref{eq-intro-second-quantization} satisfies
\begin{equation}
    \label{eq-definition-A-s}
    \me^{t \mathbf{Z}_s} = \mathbf{S}_s^{1/2} \me^{t \mathbf{Z}} \mathbf{S}_s^{ - 1/2}.
\end{equation}
It generates a contraction semigroup $ ( \me^{t \mathbf{Z}_s} ) _{t \geq 0} $ on $ \C^{2d} $, and
\begin{equation}
    \label{eq-all-s-second-quantization}
    U_s^*T_t^{ ( s ) } U_s = \Gamma ( \me^{t \mathbf{Z}_s} ), \qquad t \geq 0.
\end{equation}
\end{theorem}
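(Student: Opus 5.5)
The plan has three parts: build $U_s$ as an isometry from the Gram identity, show its range is dense, and then compare both semigroups on the normalized embedded Weyl operators.

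\textbf{Step 1: $U_s$ as an isometry.} Write $V_s(z):=\chi_\rho(z)^{-1}\rho^{s/2}W(z)\rho^{(1-s)/2}$. Proposition~\ref{proposition-all-s-gram} gives
\begin{equation*}
\langle V_s(z),V_s(w)\rangle_2=\exp\bigl(\mathbf{z}^T\mathbf{S}_s\mathbf{w}\bigr).
\end{equation*}
Since $\mathbf{S}_s$ is Hermitian positive definite (Lemma~\ref{lemma-explicit-metric}), this equals
\begin{equation*}
\exp\bigl\langle \mathbf{S}_s^{1/2}\mathbf{z},\mathbf{S}_s^{1/2}\mathbf{w}\bigr\rangle_{\C^{2d}}=\bigl\langle e(\mathbf{S}_s^{1/2}\mathbf{z}),e(\mathbf{S}_s^{1/2}\mathbf{w})\bigr\rangle .
\end{equation*}
Here $\mathbf{z}\in\R^{2d}$, so the conjugation in the $\C^{2d}$ scalar product is harmless. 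The assignment $e(\mathbf{S}_s^{1/2}\mathbf{z})\mapsto V_s(z)$ therefore extends linearly and isometrically to the closed span of $\{e(\mathbf{S}_s^{1/2}\mathbf{z}):\mathbf{z}\in\R^{2d}\}$.

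\textbf{Step 2: totality on both sides.} The set $\mathbf{S}_s^{1/2}\R^{2d}$ is a totally real subspace of full real dimension $2d$ in $\C^{2d}$. The map $\xi\mapsto e(\xi)$ is entire, so if $f$ is orthogonal to all $e(\xi)$ with $\xi$ in this subspace, then $\xi\mapsto\langle f,e(\xi)\rangle$ vanishes on a totally real subspace of maximal dimension. By the identity theorem it vanishes on all of $\C^{2d}$, hence $f=0$. So the domain side is total.

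On the range side, let $Y\in\cH$ be orthogonal to all $V_s(z)$. Then $\Tr(\rho^{(1-s)/2}Y^*\rho^{s/2}W(z))=0$ for every $z$. The operator $\rho^{(1-s)/2}Y^*\rho^{s/2}$ is trace class, and the Weyl operators are $\sigma$-weakly total, so this trace-class operator is zero. Faithfulness of $\rho$ (dense ranges of $\rho^{s/2}$ and $\rho^{(1-s)/2}$) then gives $Y=0$. The endpoints $s\in\{0,1\}$ need a remark, since one factor is the identity and the other must be handled through Hilbert--Schmidt duality, but the same reasoning applies. Steps 1 and 2 together make $U_s$ unitary.

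\textbf{Step 3: intertwining and contractivity.} Invariance of $\rho$ gives $\chi_\rho(z)=\Tr(\rho\cT_t(W(z)))=\phi_t(z)\chi_\rho(\me^{tZ}z)$. Hence, by \eqref{eq-gaussian-action},
\begin{equation*}
T_t^{(s)}V_s(z)=\chi_\rho(z)^{-1}\phi_t(z)\rho^{s/2}W(\me^{tZ}z)\rho^{(1-s)/2}=V_s(\me^{tZ}z).
\end{equation*}
Transported by $U_s$, this reads
\begin{equation*}
U_s^*T_t^{(s)}U_s\, e(\mathbf{S}_s^{1/2}\mathbf{z})=e(\mathbf{S}_s^{1/2}\me^{t\mathbf{Z}}\mathbf{z})=e\bigl(B_t\,\mathbf{S}_s^{1/2}\mathbf{z}\bigr),\qquad B_t:=\mathbf{S}_s^{1/2}\me^{t\mathbf{Z}}\mathbf{S}_s^{-1/2}.
\end{equation*}
Defining $\mathbf{Z}_s=\mathbf{S}_s^{1/2}\mathbf{Z}\mathbf{S}_s^{-1/2}$ gives \eqref{eq-definition-A-s}, with $B_t=\me^{t\mathbf{Z}_s}$.

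The main obstacle is that we do not yet know $B_t$ is a contraction, so $\Gamma(B_t)$ is not a priori defined as a bounded operator. I would extract contractivity from the contractivity of $T_t^{(s)}$, which \cite{carbone2000exponential} provides. The operator $R:=U_s^*T_t^{(s)}U_s$ is a bounded contraction with $R\,e(\xi)=e(B_t\xi)$ for $\xi\in\mathbf{S}_s^{1/2}\R^{2d}$. By Step 2 the function $\xi\mapsto\langle e(\eta),Re(\xi)\rangle$ is entire, so this extends to all $\xi\in\C^{2d}$ by analyticity.

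Restrict to the $n$-particle level through homogeneous components, for instance by replacing $\xi$ with $\lambda\xi$ and comparing the $\lambda^n$ terms. On $n=1$ this gives $\norm{B_t\xi}\le\norm{\xi}$, because $R$ is a contraction and the $n=1$ component of $e(\lambda\xi)$ is isolated this way. Alternatively, $\norm{e(B_t\xi)}^2=\me^{\abs{B_t\xi}^2}\le\me^{\abs{\xi}^2}$ holds for all $\xi$, and scaling $\xi$ yields $\abs{B_t\xi}\le\abs{\xi}$ directly. So $B_t$ is a contraction, $\Gamma(B_t)$ is defined by \eqref{eq-definition-second-quantization}, and it agrees with $R$ on a total set. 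This proves \eqref{eq-all-s-second-quantization}.

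The semigroup property of $B_t$ is immediate from its similarity with $\me^{t\mathbf{Z}}$.
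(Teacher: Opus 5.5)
Your proposal is correct and follows the paper's proof essentially step by step. The shared steps are the isometry from the Gram identity in Proposition~\ref{proposition-all-s-gram}, totality via the identity theorem on one side and $\sigma$-weak density of Weyl operators plus faithfulness on the other, the intertwining $T_t^{(s)}V_s(z)=V_s(\me^{tZ}z)$ from invariance, analytic continuation to all $\xi\in\C^{2d}$, and contractivity of $\me^{t\mathbf{Z}_s}$ read off from $\norm{e(\xi)}^2=\me^{\abs{\xi}^2}$. The paper additionally makes explicit that $\Gamma(\me^{t\mathbf{Z}_s})$ is then a bounded contraction, via $\norm{B^{\odot n}}=\norm{B}^n$ from \cite[Lemma~3.1]{ArakiSinhaSunder1995}, before comparing it with $U_s^*T_t^{(s)}U_s$ on exponential vectors.
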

\begin{proof}
Proposition~\ref{proposition-all-s-gram} shows that the prescription in \eqref{eq-all-s-unitary} preserves scalar products. We first verify totality. If $ f \in \Gamma ( \C^{2d} ) $ is orthogonal to every $ e ( \mathbf{S}_s^{1/2} x ) $, $ x \in \R^{2d} $, then the entire function
$ x \mapsto \langle f, e ( \mathbf{S}_s^{1/2} x ) \rangle $
vanishes on $ \R^{2d} $. Successive applications of the one-variable identity theorem show that it vanishes on $ \C^{2d} $. Since $ \mathbf{S}_s^{1/2} $ is invertible and exponential vectors are total, $ f = 0 $.

If $ Y \in \cH $ is orthogonal to every embedded Weyl operator, then
\begin{equation*}
    \Tr \bigl ( \rho^{ ( 1 - s ) /2} Y^* \rho^{s/2} W ( z ) \bigr ) = 0,
    \qquad z \in \C^d.
\end{equation*}
The sandwiched operator is trace class by Schatten H\"{o}lder's inequality, including at $ s = 0 $ and $ s = 1 $. The $ \sigma $-weak density of the Weyl span therefore gives
$ \rho^{ ( 1 - s ) /2} Y^* \rho^{s/2} = 0 $.
Faithfulness of $ \rho $ implies $ Y = 0 $. Thus \eqref{eq-all-s-unitary} extends uniquely to a unitary $ U_s $.

Set $ V_t = U_s^* T_t^{ ( s ) } U_s $. It is contractive because $ T_t^{ ( s ) } $ is a contraction and $ U_s $ is unitary. Equation~\eqref{eq-all-s-unitary} gives
\begin{equation*}
    U_s^* \bigl ( \rho^{s/2} W ( z ) \rho^{ ( 1 - s ) /2} \bigr )
    = \chi_\rho ( z ) e ( \mathbf{S}_s^{1/2} \mathbf{z} ), \qquad z \in \C^d.
\end{equation*}
Invariance gives $ \chi_\rho ( z ) = \phi_t ( z ) \chi_\rho ( \me^{tZ} z ) $. Thus \eqref{eq-construction-induced-semigroup} and \eqref{eq-gaussian-action} yield
\begin{equation*}
    V_t \, e ( \mathbf{S}_s^{1/2} \mathbf{z} )
    = \frac{\phi_t ( z )}{\chi_\rho ( z )}
    U_s^* \bigl ( \rho^{s/2} W ( \me^{tZ} z ) \rho^{ ( 1 - s ) /2} \bigr ) = \frac{\phi_t ( z ) \chi_\rho ( \me^{tZ} z )}{\chi_\rho ( z )}
    e ( \mathbf{S}_s^{1/2} \me^{t \mathbf{Z}} \mathbf{z} ) = e ( \mathbf{S}_s^{1/2} \me^{t \mathbf{Z}} \mathbf{z} ).
\end{equation*}
Every $ x \in \R^{2d} $ has the form $ \mathbf{z} $ for a unique $ z \in \C^d $, so
$ V_t \, e ( \mathbf{S}_s^{1/2} x ) = e ( \mathbf{S}_s^{1/2} \me^{t \mathbf{Z}} x ) $.
Both sides are entire vector-valued functions of $ x $. Applying the same uniqueness argument after taking scalar products extends the identity to every $ x \in \C^{2d} $. Since $ \mathbf{S}_s^{1/2} $ is invertible, this gives
\begin{equation*}
    V_t \, e ( \xi ) = e ( \me^{t \mathbf{Z}_s} \xi ), \qquad \xi \in \C^{2d}.
\end{equation*}
Contractivity of $ V_t $ and the identity $ \norm{e ( \xi )} = \exp{ \abs{\xi}^2/2 } $ imply $ \abs{\me^{t \mathbf{Z}_s} \xi} \leq \abs{\xi} $ for every $ \xi \in \C^{2d} $. Thus $ \norm{\me^{t \mathbf{Z}_s}} \leq 1 $, and \cite[Theorem~3.3(a)]{ArakiSinhaSunder1995} ensures that $ \Gamma ( \me^{t \mathbf{Z}_s} ) $ is bounded. To obtain its contraction bound, \cite[Lemma~3.1]{ArakiSinhaSunder1995} gives
\begin{equation*}
    \norm{( \me^{t \mathbf{Z}_s} ) ^{ \odot n}} = \norm{\me^{t \mathbf{Z}_s}}^n \leq 1, \qquad n \geq 1,
\end{equation*}
and the vacuum block is the identity on $ \C $. Consequently, for every $ f = \bigoplus_{n = 0}^\infty f_n \in \Gamma ( \C^{2d} ) $, orthogonality of the particle subspaces gives
\begin{equation*}
    \norm{\Gamma ( \me^{t \mathbf{Z}_s} ) f}^2
    = \sum_{n = 0}^\infty \norm{( \me^{t \mathbf{Z}_s} ) ^{ \odot n} f_n}^2
    \leq \sum_{n = 0}^\infty \norm{f_n}^2 = \norm{f}^2.
\end{equation*}
Hence $ \Gamma ( \me^{t \mathbf{Z}_s} ) $ is a contraction; its operator norm is exactly one because it fixes the unit vacuum vector. It agrees with $ V_t $ on the total set of exponential vectors. Since both operators are bounded, this proves \eqref{eq-all-s-second-quantization}.
\end{proof}

Following \cite[Eq.~(20.11) and Proposition~20.7]{parthasarathy1992quantum}, we denote by $ \lambda ( H ) $ the \emph{differential second quantization} of a self-adjoint one-particle matrix $ H $. It is the self-adjoint generator determined by the unitary-group identity
\begin{equation*}
    \Gamma \bigl ( \exp{ - \mi t H } \bigr ) = \exp{ - \mi t \lambda ( H ) }, \qquad t \in \R.
\end{equation*}
On finite-particle vectors, $ \lambda ( H ) $ applies $ H $ to each tensor factor and sums the results, and it vanishes on the vacuum. Parthasarathy extends this notation to bounded one-particle operators by complex linearity \cite[Eq.~(20.30) and Proposition~20.14(ii)]{parthasarathy1992quantum}. For an arbitrary matrix $ A $ on a finite-dimensional one-particle space, we take $ \lambda ( A ) $ to be the closure of that finite-particle action. The action is closable because its adjoint contains the densely defined finite-particle action of $ A^* $.

Suppose that $ \exp{ t A } $ is contractive for $ t \geq 0 $. Each symmetric tensor power $ ( \exp{ t A } ) ^{ \odot n} $ is then contractive, and the tensor-product rule gives the semigroup law for $ \Gamma ( \exp{ t A } ) $. Continuity on finite-particle vectors, their density in Fock space, and the common contraction bound prove strong continuity. Differentiation on each particle subspace gives the finite-particle action defining $ \lambda ( A ) $. The direct-sum domain and graph-norm argument in the proof of Corollary~\ref{corollary-generator-domain} below identify its closure with the full generator. That argument uses only finite dimensionality and contractivity, and therefore applies to any such $ A $. Consequently,
\begin{equation*}
    \Gamma \bigl ( \exp{ t A } \bigr ) = \exp{ t \lambda ( A ) }, \qquad t \geq 0.
\end{equation*}
Here the exponential on the right denotes the strongly continuous semigroup generated by the generally unbounded operator $ \lambda ( A ) $. Taking $ A = - \mi H $ recovers the unitary identity, since $ \lambda ( - \mi H ) = - \mi \lambda ( H ) $. The same construction appears, in different notation, in van Neerven's treatment of second-quantized semigroups \cite[Section~4]{vanNeerven2005}.

Theorem~\ref{theorem-all-s-representation} places $ \mathbf{Z}_s $ in this contraction-semigroup framework. The unitary $ U_s $ changes the representation of the induced semigroup; the dynamics need not be unitary, since $ \mathbf{Z}_s $ need not be skew-adjoint. The following result applies the standard generator construction for second quantization to $ A = \mathbf{Z}_s $. The finite-particle tensor-sum formula is given in \cite[Proposition~20.14(ii), Eq.~(20.33)]{parthasarathy1992quantum}, while the contraction-semigroup generator and finite-particle core are established in \cite[Section~3.2, Lemma~3.1]{GeorgescuGerardMoller2004}. We give a direct proof to make the domain formula and the graph-norm approximation by finite-particle vectors explicit.

\begin{corollary}\label{corollary-generator-domain}
Under the assumptions of Theorem~\ref{theorem-all-s-representation},
\begin{equation}
    \label{eq-second-quantized-generator}
    U_s^*L^{ ( s ) } U_s = \lambda ( \mathbf{Z}_s ) = \bigoplus_{n = 0}^\infty \mathbf{Z}_{s, n}, \qquad
    \mathbf{Z}_{s, n} := \left.\sum_{j = 1}^n\1^{ \otimes ( j - 1 ) } \otimes \mathbf{Z}_s \otimes \1^{ \otimes ( n - j ) }
    \right|_{( \C^{2d} ) ^{ \odot n}},
\end{equation}
where $ \mathbf{Z}_{s, 0} = 0 $. The domain of $ U_s^* L^{ ( s ) } U_s $ is
\begin{equation}
    \label{eq-second-quantized-domain}
    \left\{ f = \bigoplus_{n = 0}^\infty f_n \in \Gamma ( \C^{2d} ) : \sum_{n = 0}^\infty \norm{\mathbf{Z}_{s, n} f_n}^2 < \infty \right\}.
\end{equation}
Here $ f_n $ is the orthogonal projection of $ f $ onto the $ n $-particle subspace $ ( \C^{2d} ) ^{ \odot n} $, as in Subsection~\ref{subsection-gaussian}. Vectors with only finitely many nonzero particle components form a core for $ U_s^* L^{ ( s ) } U_s $, and their images under $ U_s $ form a core for $ L^{ ( s ) } $.
\end{corollary}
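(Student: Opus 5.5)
Since $U_s$ is unitary, Theorem~\ref{theorem-all-s-representation} shows that $U_s^* L^{(s)} U_s$ is the generator of the contraction semigroup $V_t := \Gamma(\me^{t\mathbf{Z}_s})$ on $\Gamma(\C^{2d})$. It therefore suffices to identify the generator of $V_t$. Let $P_n$ be the orthogonal projection onto $(\C^{2d})^{\odot n}$. By construction $V_t = \bigoplus_n (\me^{t\mathbf{Z}_s})^{\odot n}$, so each $P_n$ commutes with every $V_t$.

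I will first handle a single particle subspace. On the finite-dimensional space $(\C^{2d})^{\odot n}$, the map $t \mapsto (\me^{t\mathbf{Z}_s})^{\otimes n}$ is differentiable, and the Leibniz rule gives derivative $\mathbf{Z}_{s,n}$ at $t=0$. More precisely, $(\me^{t\mathbf{Z}_s})^{\otimes n} = \me^{t\sum_j \1\otimes\cdots\otimes\mathbf{Z}_s\otimes\cdots\otimes\1}$, because the summands commute; the exponent preserves the symmetric subspace. Hence $(\me^{t\mathbf{Z}_s})^{\odot n} = \me^{t\mathbf{Z}_{s,n}}$, with $\mathbf{Z}_{s,0}=0$.

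Next I will define the maximal direct-sum operator $A := \bigoplus_n \mathbf{Z}_{s,n}$ on the domain \eqref{eq-second-quantized-domain}. This operator is closed, since a direct sum of bounded operators on its maximal domain is closed. Let $L$ denote the generator of $V_t$; the claim is $L = A$.

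For $L \subset A$, take $f \in \dom L$. Projecting gives $P_n (V_t f - f)/t \to P_n L f$. The left-hand side equals $(\me^{t\mathbf{Z}_{s,n}}f_n - f_n)/t \to \mathbf{Z}_{s,n} f_n$. Hence $P_n Lf = \mathbf{Z}_{s,n} f_n$, and summing squares gives $\sum_n \norm{\mathbf{Z}_{s,n}f_n}^2 = \norm{Lf}^2 < \infty$.

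For $A \subset L$, take $f \in \dom A$. On each summand $V_t f_n - f_n = \int_0^t V_r \mathbf{Z}_{s,n} f_n \, \md r$. Summing over $n$ uses the contraction bound and norm convergence of $\sum_n \mathbf{Z}_{s,n}f_n$, and gives $V_t f - f = \int_0^t V_r A f \, \md r$. Strong continuity of $V$ then yields $f \in \dom L$ with $Lf = Af$.

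Alternatively, once finite-particle vectors are known to lie in $\dom L$, one could argue that $\lambda - A$ is injective for $\Re\lambda>0$ and that $L \subset A$ with $\lambda - L$ surjective. The integral argument is more direct, so I will use it.

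For the core statement, take $f \in \dom A$ and its truncations $f^{(N)} = \sum_{n\le N} f_n$. Then $f^{(N)} \to f$ and $A f^{(N)} = \sum_{n \le N}\mathbf{Z}_{s,n}f_n \to Af$, both by tail convergence of square-summable series. So finite-particle vectors are a core for $A = L$. Transporting by the unitary $U_s$ gives the core for $L^{(s)}$, since unitary conjugation preserves graph norms. Finally, the closure of the finite-particle action of $\mathbf{Z}_s$ is by definition $\lambda(\mathbf{Z}_s)$, so the core property gives $\lambda(\mathbf{Z}_s) = A$.

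The main obstacle is the interchange of limit and infinite sum in the inclusion $A \subset L$. In the plan this is controlled by two facts: the uniform contraction bound $\norm{V_r} \le 1$ from Theorem~\ref{theorem-all-s-representation}, and the fact that every summand is a commuting bounded block.
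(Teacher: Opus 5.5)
Your proposal is correct and follows essentially the same route as the paper. Both arguments use the commuting-summand identity $(\me^{t\mathbf{Z}_s})^{\odot n}=\me^{t\mathbf{Z}_{s,n}}$, componentwise projection of the difference quotient for the inclusion $L\subset A$, and truncation to finitely many particle components for the core. The only difference is in the inclusion $A\subset L$: you sum the integrated identities into $V_tf-f=\int_0^tV_rAf\,\md r$ and then invoke strong continuity, whereas the paper applies dominated convergence to the squared norms of the componentwise difference quotients.
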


\begin{proof}
Fix $ s \in [ 0, 1 ] $. For $ n \geq 1 $, define operators on the full tensor product $ ( \C^{2d} ) ^{ \otimes n} $ by
\begin{equation*}
    B_j := \1^{ \otimes ( j - 1 ) } \otimes \mathbf{Z}_s \otimes \1^{ \otimes ( n - j ) }, \qquad j = 1, \ldots, n.
\end{equation*}
For $ j \neq k $, both $ B_j B_k $ and $ B_k B_j $ apply $ \mathbf{Z}_s $ to the $ j $-th and $ k $-th factors and leave the other factors unchanged, so $ B_j B_k = B_k B_j $. For every integer $ m \geq 0 $, the tensor-product rule gives
\begin{equation*}
    B_j^m = \1^{ \otimes ( j - 1 ) } \otimes \mathbf{Z}_s^m \otimes \1^{ \otimes ( n - j ) },
\end{equation*}
where $ \mathbf{Z}_s^0 = \1 $. The norm-convergent exponential series therefore gives
\begin{equation*}
    \exp{ t B_j } = \sum_{m = 0}^\infty \frac{t^m B_j^m}{m!} = \1^{ \otimes ( j - 1 ) } \otimes
    \left( \sum_{m = 0}^\infty \frac{t^m}{m!} \mathbf{Z}_s^m \right)
    \otimes \1^{ \otimes ( n - j ) } = \1^{ \otimes ( j - 1 ) } \otimes \exp{ t \mathbf{Z}_s } \otimes \1^{ \otimes ( n - j ) }.
\end{equation*}
Thus $ \exp{ t B_j } $ applies $ \exp{ t \mathbf{Z}_s } $ to the $ j $-th tensor factor and leaves the others unchanged. For $ x_1, \ldots, x_n \in \C^{2d} $, it follows that
\begin{equation*}
    \begin{aligned}
        \left( \prod_{j = 1}^n \exp{ t B_j } \right) ( x_1 \otimes \cdots \otimes x_n )
        & = ( \exp{ t \mathbf{Z}_s } x_1 ) \otimes \cdots \otimes ( \exp{ t \mathbf{Z}_s } x_n )\\
        & = ( \exp{ t \mathbf{Z}_s } ) ^{ \otimes n} ( x_1 \otimes \cdots \otimes x_n ).
    \end{aligned}
\end{equation*}
Since elementary tensors span the full tensor product, this equality holds as an operator identity. Pairwise commutativity of the $ B_j $ then gives
\begin{equation*}
    \exp{ t \sum_{j = 1}^n B_j }
    = \prod_{j = 1}^n \exp{ t B_j }
    = ( \exp{ t \mathbf{Z}_s } ) ^{ \otimes n}, \qquad t \geq 0.
\end{equation*}
Every permutation of the tensor factors merely relabels the summands $ B_j $. Their sum therefore commutes with every such permutation and preserves the symmetric subspace. By the definition in \eqref{eq-second-quantized-generator}, the restriction of this sum to $ ( \C^{2d} ) ^{ \odot n} $ is $ \mathbf{Z}_{s, n} $. Since each successive application of the sum remains in this subspace, induction on $ m $ gives
\begin{equation*}
    \left( \sum_{j = 1}^n B_j \right)^m f_n = \mathbf{Z}_{s, n}^m f_n,
    \qquad f_n \in ( \C^{2d} ) ^{ \odot n}, \quad m \geq 0.
\end{equation*}
For every such $ f_n $, the norm-convergent exponential series consequently gives
\begin{equation*}
    \exp{ t \sum_{j = 1}^n B_j } f_n = \sum_{m = 0}^\infty \frac{t^m}{m!} \left( \sum_{j = 1}^n B_j \right)^m f_n = \sum_{m = 0}^\infty \frac{t^m}{m!} \mathbf{Z}_{s, n}^m f_n = \exp{ t \mathbf{Z}_{s, n} } f_n.
\end{equation*}
By \eqref{eq-definition-second-quantization}, the symmetric tensor power is the restriction of the full tensor power to $ ( \C^{2d} ) ^{ \odot n} $. Combining the preceding identities therefore gives
\begin{equation*}
    ( \exp{ t \mathbf{Z}_s } ) ^{ \odot n} f_n = ( \exp{ t \mathbf{Z}_s } ) ^{ \otimes n} f_n = \exp{ t \sum_{j = 1}^n B_j } f_n = \exp{ t \mathbf{Z}_{s, n} } f_n.
\end{equation*}
Since this holds for every $ f_n \in ( \C^{2d} ) ^{ \odot n} $, it proves the operator identity
\begin{equation*}
    ( \exp{ t \mathbf{Z}_s } ) ^{ \odot n} = \exp{ t \mathbf{Z}_{s, n} }, \qquad t \geq 0.
\end{equation*}
For $ n = 0 $, this identity reads $ 1 = \exp{ 0 } $. Now let $ f = \bigoplus_{n = 0}^\infty f_n \in \Gamma ( \C^{2d} ) $ and $ t > 0 $. Combining Theorem~\ref{theorem-all-s-representation} with these particle-subspace identities gives
\begin{equation*}
    U_s^* T_t^{ ( s ) } U_s f = \Gamma ( \me^{t \mathbf{Z}_s} ) f = \bigoplus_{n = 0}^\infty ( \me^{t \mathbf{Z}_s} ) ^{ \odot n} f_n = \bigoplus_{n = 0}^\infty \me^{t \mathbf{Z}_{s, n}} f_n.
\end{equation*}
Using $ U_s^* U_s f = f $, we subtract and divide by $ t $ componentwise to obtain
\begin{equation*}
    U_s^* \frac{T_t^{ ( s ) } U_s f - U_s f}{t} = \frac{U_s^* T_t^{ ( s ) } U_s f - f}{t} = \frac{1}{t} \left( \bigoplus_{n = 0}^\infty \me^{t \mathbf{Z}_{s, n}} f_n - \bigoplus_{n = 0}^\infty f_n \right)
    = \bigoplus_{n = 0}^\infty \frac{\me^{t \mathbf{Z}_{s, n}} - \1}{t} f_n.
\end{equation*}
The last direct sum is well defined in Fock space for every fixed $ t > 0 $. Indeed, contractivity on each particle subspace gives
\begin{equation*}
    \sum_{n = 0}^\infty \norm{\frac{\me^{t \mathbf{Z}_{s, n}} - \1}{t} f_n}^2 \leq \frac{4}{t^2} \sum_{n = 0}^\infty \norm{f_n}^2 < \infty.
\end{equation*}
By unitarity of $ U_s $, the vector $ U_s f $ belongs to $ \operatorname{Dom} L^{ ( s ) } $ precisely when this difference quotient converges in Fock space as $ t \downarrow 0 $. Its limit is then $ U_s^* L^{ ( s ) } U_s f $.

Suppose first that $ U_s f \in \operatorname{Dom} L^{ ( s ) } $. The orthogonal projection onto the $ n $-particle subspace is bounded and therefore commutes with this limit. On that finite-dimensional subspace, the derivative of $ \me^{t \mathbf{Z}_{s, n}} f_n $ at zero is $ \mathbf{Z}_{s, n} f_n $. Thus the $ n $-particle component of $ U_s^* L^{ ( s ) } U_s f $ is $ \mathbf{Z}_{s, n} f_n $. Orthogonality and unitarity give
\begin{equation*}
    \sum_{n = 0}^\infty \norm{\mathbf{Z}_{s, n} f_n}^2
    = \norm{L^{ ( s ) } U_s f}_2^2 < \infty.
\end{equation*}
This proves the necessity of \eqref{eq-second-quantized-domain}.

Conversely, suppose that $ f $ belongs to the set in \eqref{eq-second-quantized-domain}. For each $ n $ and $ t > 0 $,
\begin{equation*}
    \frac{\me^{t \mathbf{Z}_{s, n}} - \1}{t} f_n
    = \frac{1}{t} \int_0^t \me^{r \mathbf{Z}_{s, n}} \mathbf{Z}_{s, n} f_n \, \md r.
\end{equation*}
Since $ \me^{r \mathbf{Z}_{s, n}} = ( \me^{r \mathbf{Z}_s} ) ^{ \odot n} $ is contractive, the norm of this quotient is at most $ \norm{\mathbf{Z}_{s, n} f_n} $. For each fixed $ n $, its difference from $ \mathbf{Z}_{s, n} f_n $ tends to zero and has norm at most $ 2 \norm{\mathbf{Z}_{s, n} f_n} $. By the square-summability assumption, dominated convergence applies to the sum of these squared norms. Hence the full difference quotient converges to $ \bigoplus_{n = 0}^\infty \mathbf{Z}_{s, n} f_n $, proving that $ U_s f \in \operatorname{Dom} L^{ ( s ) } $ and
\begin{equation*}
    L^{ ( s ) } U_s f = U_s \left( \bigoplus_{n = 0}^\infty \mathbf{Z}_{s, n} f_n \right).
\end{equation*}

Finally, for $ f $ in the domain \eqref{eq-second-quantized-domain}, let $ f^{ ( r ) } $ be its truncation to the first $ r + 1 $ particle components. Then
\begin{equation*}
    \norm{U_s f - U_s f^{ ( r ) }}_2^2
    + \norm{L^{ ( s ) } U_s f - L^{ ( s ) } U_s f^{ ( r ) }}_2^2
    = \sum_{n > r} \left( \norm{f_n}^2 + \norm{\mathbf{Z}_{s, n} f_n}^2 \right) \longrightarrow 0.
\end{equation*}
Thus $ U_s f^{ ( r ) } $ converges to $ U_s f $ in the graph norm of $ L^{ ( s ) } $. This proves both core assertions and identifies the conjugated generator with the closure of the finite-particle action defining $ \lambda ( \mathbf{Z}_s ) $. Uniqueness of the semigroup generated by this closed operator now gives $ \Gamma ( \exp{ t \mathbf{Z}_s } ) = \exp{ t \lambda ( \mathbf{Z}_s ) } $ for $ t \geq 0 $. The same difference-quotient and core argument applies to any finite-dimensional matrix $ A $ generating a contraction semigroup and proves the general contraction-semigroup identity stated above.
\end{proof}

\subsection{Arbitrary faithful invariant states}\label{subsection-arbitrary-state}

We now remove the Gaussian assumption on the given invariant state. The invariant-state decomposition provides a faithful Gaussian reference density and shows that it induces the same semigroup on the Hilbert--Schmidt space $ \cH $ in \eqref{eq-definition-mathcal-H}.

\begin{proposition}\label{proposition-state-independence}
Suppose that a finite-mode Gaussian QMS admits a faithful normal invariant state. Then it admits a faithful normal Gaussian invariant state $ \sigma $. For each $ s \in [ 0, 1 ] $, the induced semigroup $ ( T_t^{ ( s ) } ) _{t \geq 0} $ on $ \cH $ is independent of the choice of faithful normal invariant density. In particular, Theorem~\ref{theorem-all-s-representation} gives its second-quantization representation using $ \sigma $.
\end{proposition}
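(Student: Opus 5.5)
The plan is to reduce everything to the structure of the normal invariant states of a Gaussian QMS, as described by Girotti and Poletti in \cite{girotti2025gaussian}. The first step is to recall their invariant-state decomposition. I expect it to provide a symplectic (Bogoliubov) change of variables and a splitting of the modes, and hence a unitary identification $\Gamma(\C^d) \simeq \Gamma(\C^{d_1}) \otimes \Gamma(\C^{d_2})$. In this identification the semigroup factorizes as $\cT_t = \cT^{(1)}_t \otimes \alpha_t$, where:
\begin{itemize}
\item $\cT^{(1)}$ is a Gaussian QMS with stable drift and hence a unique (Gaussian, faithful) invariant state $\sigma_1$;
\item $\alpha_t(X) = \me^{\mi tH} X \me^{-\mi tH}$ is a quasi-free automorphism group with quadratic Hamiltonian $H$, namely the part of the drift with purely imaginary spectrum on which the diffusion vanishes.
\end{itemize}
Every normal invariant state should then have the form $\rho = \sigma_1 \otimes \tau$, with $\tau$ an $\alpha$-invariant density, i.e.\ $[\tau, H] = 0$. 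Faithfulness of $\rho$ is equivalent to faithfulness of $\tau$.

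For existence, I would take $\tau$ to be a faithful Gaussian density commuting with $H$. Diagonalizing the quadratic form of $H$ symplectically and taking $\tau$ as the corresponding thermal product $\bigotimes_j \rho_{\beta}$ of \eqref{eq-thermal-density}, in the same normal-mode coordinates, gives such a density. If $H$ has degenerate or zero frequencies, any thermal state in a mode basis diagonalizing $H$ still commutes with $H$. Then $\sigma := \sigma_1 \otimes \tau$ is a faithful Gaussian invariant state, since a tensor product of Gaussian states is Gaussian after undoing the Bogoliubov transformation via \eqref{eq-thermal-product}.

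For independence, fix two faithful invariant densities $\rho = \sigma_1 \otimes \tau$ and $\rho' = \sigma_1 \otimes \tau'$, and take product observables $X = X_1 \otimes X_2$. Then
\[
\rho^{s/2} X \rho^{(1-s)/2} = \bigl(\sigma_1^{s/2} X_1 \sigma_1^{(1-s)/2}\bigr) \otimes \bigl(\tau^{s/2} X_2 \tau^{(1-s)/2}\bigr).
\]
Since $\tau$ commutes with $\me^{\mi tH}$, the second factor of the transformed element equals
\[
\me^{\mi tH}\bigl(\tau^{s/2} X_2 \tau^{(1-s)/2}\bigr)\me^{-\mi tH}.
\]
So on the dense set of such elementary tensors, $T^{(s)}_t$ acts as $T^{(1,s)}_t \otimes \operatorname{Ad}(\me^{\mi tH})$ on $\cB_2(\Gamma(\C^{d_1})) \otimes \cB_2(\Gamma(\C^{d_2})) \simeq \cH$, and this formula does not involve $\tau$. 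The embedded elements for $\tau$ are dense in $\cB_2$ by faithfulness, exactly as in the totality argument of Theorem~\ref{theorem-all-s-representation}. The algebraic tensor products are therefore dense in $\cH$, and both induced operators are bounded. Hence $T^{(s)}_t$ defined via $\rho$ coincides with the operator defined via $\rho'$, and in particular with the one defined via $\sigma$, to which Theorem~\ref{theorem-all-s-representation} applies.

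The main obstacle is the first step: extracting from \cite{girotti2025gaussian} precisely this tensor factorization, with the non-dissipative part acting by inner automorphisms and all invariant states of the form $\sigma_1 \otimes \tau$ with $[\tau, H] = 0$. I would attempt it through their description of the decoherence-free subalgebra and of the kernel of $\mathbf{C}$ combined with the spectral subspace of $\mathbf{Z}$ for purely imaginary eigenvalues. If the decomposition instead only yields $\rho$ in the commutant of the fixed-point algebra, I would fall back to the following argument. One shows that $\rho^{\mi t}$ acts on the relevant Weyl operators in the same way for all faithful invariant $\rho$, i.e.\ that the modular groups agree on the range of $\cT_t$. This gives $\rho^{s/2}\cT_t(X)\rho^{-s/2} = \sigma^{s/2}\cT_t(X)\sigma^{-s/2}$ on a dense set, and the independence follows from this identity.
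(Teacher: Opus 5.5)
Your proposal follows essentially the same route as the paper: the Girotti--Poletti decomposition into a Hamiltonian factor $\me^{\mi tH_0}\cdot\me^{-\mi tH_0}$ and a stable factor with unique Gaussian invariant state, factorization of the induced semigroup on elementary tensors in a form independent of the commuting factor, and a product thermal density commuting with $H_0$ as the Gaussian reference state. Two small precision points. First, the cited decomposition already gives $H_0=\sum_j\omega_j N_j$ after a Gaussian unitary, which may include a displacement, so you do not need to symplectically diagonalize a general, possibly indefinite, quadratic $H$. Second, faithfulness of the stable factor's invariant state follows from faithfulness of the given $\rho$, not from stability; the pure vacuum state of pure damping shows that stability alone does not give it.
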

\begin{proof}
By \cite[Theorems~11 and~13]{girotti2025gaussian}, a fixed Gaussian unitary conjugation identifies $ \Gamma ( \C^d ) $ with $ \Gamma ( \C^{d_0} ) \otimes \Gamma ( \C^{d_{-} } ) $, where $ d_0 + d_{-} = d $, and gives
\begin{equation}
    \label{eq-invariant-decomposition}
    \cT_t ( X_0 \otimes X_{-} )
    = \bigl ( \me^{\mi tH_0} X_0 \me^{ - \mi tH_0} \bigr ) \otimes \cT_{-, t} ( X_{-} ).
\end{equation}
Here $ H_0 = \sum_{j = 1}^{d_0} \omega_j N_j $, with $ \omega_j \in \R $, and $ ( \cT_{-, t} ) _{t \geq 0} $ is the Gaussian QMS on $ \cB ( \Gamma ( \C^{d_{-} } ) ) $ whose drift is stable. It has a unique normal invariant state $ \rho_{-} $, which is Gaussian. Every invariant density is $ \rho_0 \otimes \rho_{-} $, where $ \rho_0 $ commutes with $ \me^{\mi tH_0} $ for every $ t \in \R $. Faithfulness of the invariant density implies faithfulness of both factors. A factor with no modes is understood as $ \C $.

This commutation shows that the induced action on the first Hilbert--Schmidt factor is the same unitary conjugation. Hence, for elementary Hilbert--Schmidt tensors,
\begin{equation}
    \label{eq-induced-factorization}
    T_t^{ ( s ) } ( Y_0 \otimes Y_{-} )
    = \bigl ( \me^{\mi tH_0} Y_0 \me^{ - \mi tH_0} \bigr ) \otimes T_{-, t}^{ ( s ) } ( Y_{-} ),
\end{equation}
where $ T_{-, t}^{ ( s ) } $ is induced by $ \cT_{-, t} $ and $ \rho_{-} $. Density extends this identity to the full Hilbert--Schmidt space.

The right-hand side of \eqref{eq-induced-factorization} is independent of $ \rho_0 $. A faithful product thermal density commutes with $ H_0 $, so it can be chosen as $ \rho_0 $. The resulting product $ \rho_0 \otimes \rho_{-} $ is Gaussian and faithful. Undoing the Gaussian unitary conjugation gives the reference density $ \sigma $, to which Theorem~\ref{theorem-all-s-representation} applies.
\end{proof}

\begin{remark}\label{remark-reference-state}
If the given invariant state $ \rho $ is non-Gaussian, Proposition~\ref{proposition-state-independence} provides a faithful Gaussian reference state $ \sigma $ to which Theorem~\ref{theorem-all-s-representation} applies. The matrix $ \mathbf{S}_s $ is computed from $ \sigma $, and $ U_s $ is defined by \eqref{eq-all-s-unitary} with $ \rho $ and $ \chi_\rho $ replaced by $ \sigma $ and $ \chi_\sigma $, respectively. The Gaussian pairing identity is used for this reference state; it is not asserted for the vectors $ \rho^{s/2} W ( z ) \rho^{ ( 1 - s ) /2} $ associated with $ \rho $. By state independence, the resulting representation describes the same induced semigroup. If the drift is stable, uniqueness of the invariant state gives $ \sigma = \rho $.
\end{remark}

\section{Compactness and spectral consequences}\label{section-proof-main}

The second-quantization representation reduces the compactness and spectral questions to properties of the finite-dimensional drift. We first characterize compactness of the induced semigroups and their generator resolvents in terms of drift stability and uniqueness of the normal invariant state, then derive exact trace and norm formulas and determine the generator spectrum with its algebraic multiplicities.

\subsection{Compactness criterion}\label{subsection-compactness-criterion}

By Theorem~\ref{theorem-all-s-representation} and Proposition~\ref{proposition-state-independence}, a faithful normal invariant state ensures that $ \norm{\me^{t \mathbf{Z}_s}} \leq 1 $ for every $ s \in [ 0, 1 ] $ and $ t \geq 0 $. Thus the one-particle evolution does not increase vector norms, but this bound does not imply $ \norm{\me^{t \mathbf{Z}_s}} \to 0 $ as $ t \to \infty $. Since $ \mathbf{Z}_s $ is similar to $ \mathbf{Z} $, this decay is equivalent to stability of the drift. The next theorem characterizes this stronger property through compactness of the induced semigroups and their generator resolvents. It also shows that compactness for a single embedding already determines compactness for the entire family.

\begin{theorem}\label{theorem-main}
Let $ ( \cT_t ) _{t \geq 0} $ be a Gaussian QMS on $ \cB ( \Gamma ( \C^d ) ) $, with $ 1 \leq d < \infty $, admitting a faithful normal invariant state $ \rho $. The following conditions are equivalent:
\begin{enumerate}[label=\textup{(\roman*)},leftmargin=*]
\item\label{condition-stable} The drift matrix $ \mathbf{Z} $ is stable.
\item\label{condition-unique} The semigroup has a unique normal invariant state.
\item\label{condition-kernel} For some $ s \in [ 0, 1 ] $, $ \ker L^{ ( s ) } $ is finite dimensional.
\item\label{condition-resolvent} For some $ s \in [ 0, 1 ] $, $ L^{ ( s ) } $ has compact resolvent.
\item\label{condition-one-time} For some $ s \in [ 0, 1 ] $ and some $ t > 0 $, $ T_t^{ ( s ) } $ is compact.
\item\label{condition-immediate} For every $ s \in [ 0, 1 ] $ and every $ t > 0 $, $ T_t^{ ( s ) } $ is compact.
\item\label{condition-trace-class} For every $ s \in [ 0, 1 ] $ and every $ t > 0 $, $ T_t^{ ( s ) } \in \cS_1 ( \cH ) $.
\end{enumerate}
If these conditions hold, $ \rho $ is Gaussian and $ \ker L^{ ( s ) } = \C \rho^{1/2} $ for every $ s $. If they fail, every $ \ker L^{ ( s ) } $ is infinite dimensional.
\end{theorem}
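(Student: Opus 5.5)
By Proposition~\ref{proposition-state-independence} we may replace $\rho$ by a faithful Gaussian reference state $\sigma$. Theorem~\ref{theorem-all-s-representation} then reduces every statement about $T_t^{(s)}$ to the second quantization $\Gamma(\me^{t\mathbf{Z}_s})$ on $\Gamma(\C^{2d})$, and Corollary~\ref{corollary-generator-domain} reduces every statement about $L^{(s)}$ to $\lambda(\mathbf{Z}_s)=\bigoplus_n \mathbf{Z}_{s,n}$. The equivalence \ref{condition-stable}$\Leftrightarrow$\ref{condition-unique} is \cite[Theorems~11 and~13]{girotti2025gaussian}. The trivial implications are \ref{condition-trace-class}$\Rightarrow$\ref{condition-immediate}$\Rightarrow$\ref{condition-one-time}. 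For the remaining chain we prove \ref{condition-stable}$\Rightarrow$\ref{condition-trace-class}, \ref{condition-stable}$\Rightarrow$\ref{condition-resolvent}, and then \ref{condition-one-time}$\Rightarrow$\ref{condition-kernel}, \ref{condition-resolvent}$\Rightarrow$\ref{condition-kernel}, and \ref{condition-kernel}$\Rightarrow$\ref{condition-stable}.

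\emph{Stability implies trace class and compact resolvent.} Assume $\mathbf{Z}$ is stable. Since $\mathbf{Z}_s$ is similar to $\mathbf{Z}$ and $\norm{\me^{t\mathbf{Z}_s}}\le 1$, we have $\norm{\me^{t\mathbf{Z}_s}}<1$ for every $t>0$. The reason is that $\me^{t\mathbf{Z}_s}$ is a contraction with no eigenvalue of modulus one: a norm-one vector $\xi$ with $\abs{\me^{t\mathbf{Z}_s}\xi}=\abs{\xi}$ would, by monotonicity of $r\mapsto\abs{\me^{r\mathbf{Z}_s}\xi}$, have constant norm for all $r\geq 0$, which contradicts $\me^{r\mathbf{Z}_s}\to 0$. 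Compactness of the unit sphere then gives a strict bound. For a strict contraction $B$ on a finite-dimensional space, \cite[Theorem~3.10]{ArakiSinhaSunder1995} gives $\Gamma(B)\in\cS_1$. More directly, $\norm{B^{\odot n}}_{\cS_1}\le \binom{n+2d-1}{2d-1}\norm{B}^n$, and this series is summable. This proves \ref{condition-trace-class}.

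For the resolvent, write $\mathbf{Z}_{s,n}$ on the $n$-particle space. Its eigenvalues are sums $\mu_{i_1}+\dots+\mu_{i_n}$ of eigenvalues of $\mathbf{Z}$, all of real part at most $-n\delta$, where $\delta>0$ is the stability margin. We would use the Laplace transform $(\lambda-L)^{-1}=\int_0^\infty \me^{-\lambda t}T_t\,\md t$. For $\lambda>0$ this integral converges in operator norm, and $T_t$ is compact for every $t>0$. The integrand is norm-continuous on $(0,\infty)$, because $t\mapsto \Gamma(\me^{t\mathbf{Z}_s})$ is norm continuous there, as a geometric decay estimate on the blocks shows. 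A norm limit of compact operators is compact, so the resolvent is compact. This proves \ref{condition-resolvent}.

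\emph{Kernel criterion.} If $T_t^{(s)}$ is compact for some $t>0$, then $\ker L^{(s)}\subseteq\ker(T_t^{(s)}-\1)$, and the latter is finite dimensional by Riesz theory. If $L^{(s)}$ has compact resolvent, then its kernel is an eigenspace of the compact resolvent at eigenvalue $\lambda^{-1}$, hence finite dimensional. So both \ref{condition-one-time} and \ref{condition-resolvent} give \ref{condition-kernel}.

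For \ref{condition-kernel}$\Rightarrow$\ref{condition-stable}, suppose $\mathbf{Z}$ is not stable. Contractivity of $\me^{t\mathbf{Z}_s}$ forbids eigenvalues with positive real part, and also forbids nontrivial Jordan blocks on the imaginary axis. Hence there is an eigenvalue $\mi\omega$ with eigenvector $\xi$. Since $\mathbf{Z}_s$ is similar to the real matrix $\mathbf{Z}$, the complex conjugate $-\mi\omega$ is also an eigenvalue, with eigenvector $\eta$, say $\mathbf{Z}_s\eta=-\mi\omega\eta$; we must check this survives the complex similarity $\mathbf{S}_s^{1/2}$, since eigenvalues of a matrix similar to a real one come in conjugate pairs. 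Then $\xi^{\odot k}\odot\eta^{\odot k}$ is nonzero and lies in $\ker\mathbf{Z}_{s,2k}$ for every $k$. This gives infinitely many linearly independent kernel vectors in distinct particle spaces, which proves the contrapositive. It also proves the last sentence of the statement, for every $s$.

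\emph{Remaining claims.} Under stability, every kernel vector of $\lambda(\mathbf{Z}_s)$ is a sum over particle numbers of kernel vectors of the $\mathbf{Z}_{s,n}$. For $n\ge1$, $\mathbf{Z}_{s,n}$ is invertible because all its eigenvalues have real part at most $-n\delta$. Hence the kernel is $\C e(0)$. Moreover $U_s e(0)=\sigma^{1/2}$, and uniqueness forces $\rho=\sigma$ Gaussian.

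The main obstacle is the norm continuity and the strict-contraction step needed for trace class at small $t$. The key point is that the strict bound $\norm{\me^{t\mathbf{Z}_s}}<1$ must hold for every $t>0$, not merely for large $t$.
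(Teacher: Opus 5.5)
Your reduction to $\Gamma(\me^{t\mathbf{Z}_s})$ and $\lambda(\mathbf{Z}_s)$ and your chain of implications match the paper's proof. Your Laplace-transform argument for compact resolvent is the content of the Engel--Nagel result the paper cites. The dimension-count bound $\norm{B^{\odot n}}_{\cS_1}\le\binom{n+2d-1}{2d-1}\norm{B}^n$ is a valid substitute for the Araki--Sinha--Sunder citation. So is the kernel identification via invertibility of $\mathbf{Z}_{s,n}$ for $n\ge 1$, which replaces the paper's vacuum-projection limit. Your uniform fixed vectors $\xi^{\odot k}\odot\eta^{\odot k}$ (with $\eta=\xi$ when $\omega=0$) also work. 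One step, however, fails as written, and every small-$t$ assertion rests on it.

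The problem is the strict bound $\norm{\me^{t\mathbf{Z}_s}}<1$ for every $t>0$. First, having no eigenvalue of modulus one does not make a contraction strict: $\begin{bmatrix}0&1\\0&0\end{bmatrix}$ has norm one and spectral radius zero. Second, if $\abs{\me^{t\mathbf{Z}_s}\xi}=\abs{\xi}=1$, monotonicity of $r\mapsto\abs{\me^{r\mathbf{Z}_s}\xi}$ gives constancy only on $[0,t]$, not for all $r\ge 0$. For general contraction semigroups this is a real obstruction. The left-shift semigroup on $L^2(0,\infty)$ preserves the norm of a function supported in $[t,2t]$ up to time $t$ and then annihilates it by time $2t$.

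What closes the gap is finite dimensionality, as in the paper. The function $r\mapsto\abs{\me^{r\mathbf{Z}_s}\xi}^2$ is real-analytic, so constancy on $[0,t]$ forces constancy on $\R$, contradicting $\me^{r\mathbf{Z}_s}\to 0$. This repair is not optional. If $\norm{\me^{t\mathbf{Z}_s}}=1$ for some $t>0$, then every block $(\me^{t\mathbf{Z}_s})^{\odot n}$ has norm one and $\Gamma(\me^{t\mathbf{Z}_s})$ is not compact. Your proofs of (vi) and (vii) at small $t$, and the norm continuity of $t\mapsto T_t^{(s)}$ on all of $(0,\infty)$, depend on it. So does your Laplace-transform proof of (iv), which needs compactness of $T_t^{(s)}$ for every $t>0$. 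With this step supplied, the rest of your proposal goes through.
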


The equivalence of \ref{condition-stable} and \ref{condition-unique} was established by Girotti and Poletti \cite[Theorems~11 and~13]{girotti2025gaussian}; Theorem~\ref{theorem-main} connects these conditions with the induced operators. In particular, ``for some $ s $'' in \ref{condition-kernel} and \ref{condition-resolvent} can be replaced by ``for every $ s $''. If the drift is not stable, no induced generator has compact resolvent.

The second-quantization representation reduces compactness to strict contractivity of the one-particle semigroup. The following lemma isolates the obstruction caused by imaginary drift eigenvalues.

\begin{lemma}\label{lemma-infinite-fixed-space}
Under the assumptions of Theorem~\ref{theorem-main}, if $ \mathbf{Z} $ is not stable, every $ \ker L^{ ( s ) } $ is infinite dimensional.
\end{lemma}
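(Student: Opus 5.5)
We work entirely in the second-quantization picture. By Proposition~\ref{proposition-state-independence} we may replace $\rho$ by a faithful Gaussian reference state $\sigma$ without changing $T_t^{(s)}$. Theorem~\ref{theorem-all-s-representation} and Corollary~\ref{corollary-generator-domain} then show that $U_s^* L^{(s)} U_s = \bigoplus_n \mathbf{Z}_{s,n}$ on the domain \eqref{eq-second-quantized-domain}. Hence $\ker L^{(s)}$ is unitarily equivalent to $\bigoplus_n \ker \mathbf{Z}_{s,n}$, and it suffices to produce a nonzero kernel vector in $(\C^{2d})^{\odot n}$ for infinitely many $n$. Since these subspaces are mutually orthogonal, such vectors are linearly independent and the kernel is infinite dimensional.

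First I will exploit the spectral consequence of contractivity. Since $\mathbf{Z}$ is not stable and $\mathbf{Z}_s = \mathbf{S}_s^{1/2}\mathbf{Z}\mathbf{S}_s^{-1/2}$ is similar to $\mathbf{Z}$, the matrix $\mathbf{Z}_s$ has an eigenvalue $\mu$ with $\Re\mu \geq 0$. The semigroup $\me^{t\mathbf{Z}_s}$ is contractive (Theorem~\ref{theorem-all-s-representation}), so $\Re\mu \le 0$, and therefore $\mu = \mi\omega$ with $\omega\in\R$. Because $\mathbf{Z}$ is a real matrix, $\overline{\mu} = -\mi\omega$ is also an eigenvalue of $\mathbf{Z}$, hence of $\mathbf{Z}_s$. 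Choose eigenvectors $\mathbf{Z}_s u = \mi\omega u$ and $\mathbf{Z}_s v = -\mi\omega v$; when $\omega = 0$ a single vector $u = v$ will do. Neither eigenvalue may carry a nontrivial Jordan block, since that would contradict boundedness of $\me^{t\mathbf{Z}_s}$, although we need only the eigenvectors.

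Next I will construct kernel vectors. For $k \geq 1$, set $f_{2k} := (u\odot v)^{\odot k}$, that is, the symmetrization of $u^{\otimes k}\otimes v^{\otimes k}$, which lies in $(\C^{2d})^{\odot 2k}$. In the case $\omega = 0$ one may simply take $u^{\odot n}$ for every $n$. Since $\mathbf{Z}_{s,2k}$ is the restriction of the tensor sum $\sum_j B_j$, which commutes with symmetrization, applying it to $u^{\otimes k}\otimes v^{\otimes k}$ gives $(k\,\mi\omega - k\,\mi\omega)$ times that tensor, namely zero. Thus $\mathbf{Z}_{s,2k} f_{2k} = 0$, and $U_s f_{2k} \in \ker L^{(s)}$ for every $k$.

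The step requiring most care is showing $f_{2k} \neq 0$, because a symmetrization can vanish. I will handle this with the standard identity relating symmetric tensors to polynomials. Write $\langle w^{\otimes 2k}, \mathrm{Sym}(u^{\otimes k}\otimes v^{\otimes k})\rangle = \langle w,u\rangle^k\langle w,v\rangle^k$ for $w\in\C^{2d}$; the right-hand side is not identically zero in $w$ because $u,v\neq 0$. An alternative route is to use the exponential-vector identity $\langle e(w), f\rangle$ and the pairing of a symmetric tensor with $w^{\otimes n}$. Either way $f_{2k}\neq0$, and this completes the argument for every $s\in[0,1]$.
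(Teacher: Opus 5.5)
Your proposal is correct and takes essentially the same route as the paper. You pass to the Gaussian reference state and the second-quantized generator, use contractivity together with reality of $\mathbf{Z}$ to obtain eigenvectors for $\pm\mi\omega$, and take the symmetrized products $u^{\odot k}\odot v^{\odot k}$ in distinct particle subspaces, which $\lambda(\mathbf{Z}_s)$ annihilates. Your pairing identity $\langle w^{\otimes 2k}, \mathrm{Sym}(u^{\otimes k}\otimes v^{\otimes k})\rangle = \langle w,u\rangle^k\langle w,v\rangle^k$ also supplies a justification that these symmetric tensors are nonzero, a point the paper only asserts.
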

\begin{proof}
By Proposition~\ref{proposition-state-independence}, we may use a faithful Gaussian invariant reference state and apply Theorem~\ref{theorem-all-s-representation}. Fix $ s \in [ 0, 1 ] $. Applying the contraction $ \me^{t \mathbf{Z}_s} $ to an eigenvector shows that every eigenvalue $ \lambda $ of $ \mathbf{Z}_s $ satisfies $ \Re \lambda \leq 0 $. Since $ \mathbf{Z}_s $ is similar to $ \mathbf{Z} $, failure of stability gives an eigenvalue $ \mi \omega $ with $ \omega \in \R $.

By Corollary~\ref{corollary-generator-domain}, every finite-particle vector $ f $ belongs to the domain of $ \lambda ( \mathbf{Z}_s ) $, and
\begin{equation*}
    L^{ ( s ) } U_s f = U_s \lambda ( \mathbf{Z}_s ) f.
\end{equation*}
Thus it suffices to construct infinitely many orthogonal finite-particle vectors annihilated by $ \lambda ( \mathbf{Z}_s ) $.

If $ \omega = 0 $, choose a nonzero vector $ u $ with $ \mathbf{Z}_s u = 0 $. For $ n \geq 1 $, put $ v_n = u^{ \otimes n} \in ( \C^{2d} ) ^{ \odot n} $. The tensor-factor formula in \eqref{eq-second-quantized-generator} gives
\begin{equation*}
    \lambda ( \mathbf{Z}_s ) v_n
    = \sum_{j = 1}^n u^{ \otimes ( j - 1 ) } \otimes ( \mathbf{Z}_s u ) \otimes u^{ \otimes ( n - j ) }
    = 0.
\end{equation*}
If $ \omega \neq 0 $, reality of $ \mathbf{Z} $ and similarity of $ \mathbf{Z}_s $ to $ \mathbf{Z} $ imply that $ - \mi \omega $ is also an eigenvalue. Choose nonzero eigenvectors $ u_{+} $ and $ u_{-} $ such that $ \mathbf{Z}_s u_{+} = \mi \omega u_{+} $ and $ \mathbf{Z}_s u_{-} = - \mi \omega u_{-} $. Set
\begin{equation*}
    v_n = u_{+} ^{ \odot n} \odot u_{-} ^{ \odot n} \in ( \C^{2d} ) ^{ \odot 2n}, \qquad n \geq 1.
\end{equation*}
Differential second quantization applies $ \mathbf{Z}_s $ to each of the $ 2n $ factors and sums the results. The $ n $ factors $ u_{+} $ contribute $ n \mi \omega v_n $, while the $ n $ factors $ u_{-} $ contribute $ - n \mi \omega v_n $. Hence
\begin{equation*}
    \lambda ( \mathbf{Z}_s ) v_n = n \mi \omega v_n - n \mi \omega v_n = 0.
\end{equation*}
Since $ u_{+} $ and $ u_{-} $ are nonzero, their symmetric tensor product $ v_n $ is nonzero.
The vacuum is also annihilated by $ \lambda ( \mathbf{Z}_s ) $. In both cases, the vectors $ v_n $ lie in distinct particle subspaces, so normalization gives infinitely many orthonormal vectors. Their Hilbert--Schmidt images satisfy
\begin{equation*}
    U_s v_n \in \dom L^{ ( s ) }, \qquad
    L^{ ( s ) } U_s v_n = U_s \lambda ( \mathbf{Z}_s ) v_n = 0.
\end{equation*}
After normalization, their images under $ U_s $ form an orthonormal sequence in $ \ker L^{ ( s ) } $.
\end{proof}

\begin{proof}[Proof of Theorem~\ref{theorem-main}]
By \cite[Theorems~11 and~13]{girotti2025gaussian}, conditions \ref{condition-stable} and \ref{condition-unique} are equivalent, and under \ref{condition-stable} the invariant state $ \rho $ is Gaussian. We first prove that \ref{condition-stable} implies \ref{condition-immediate} and \ref{condition-trace-class}. Assume that the drift is stable and fix $ s \in [ 0, 1 ] $. By Theorem~\ref{theorem-all-s-representation}, the induced semigroup is unitarily equivalent to $ \Gamma ( \me^{t \mathbf{Z}_s} ) $. The matrices $ \me^{t \mathbf{Z}_s} $ are contractions and converge to zero because $ \mathbf{Z}_s $ is similar to the stable matrix $ \mathbf{Z} $.

In fact, $ \norm{\me^{t \mathbf{Z}_s}} < 1 $ for every $ t > 0 $. Otherwise, finite dimensionality gives a unit vector $ v $ and $ t_0 > 0 $ such that $ \abs{\me^{t_0 \mathbf{Z}_s} v} = 1 $. Contractivity and the semigroup property imply
\begin{equation*}
    1 = \abs{\me^{t_0 \mathbf{Z}_s} v} \leq \abs{\me^{t \mathbf{Z}_s} v} \leq 1,
    \qquad 0 \leq t \leq t_0.
\end{equation*}
The real-analytic function $ t \mapsto \abs{\me^{t \mathbf{Z}_s} v}^2 $ is therefore constant on an interval, and hence on $ \R $, contradicting its convergence to zero.

Fix $ t > 0 $. For $ r \in \N $, let $ Q_{\leq r} $ be the orthogonal projection onto $ \bigoplus_{n = 0}^r ( \C^{2d} ) ^{ \odot n} $. Since $ \Gamma ( \me^{t \mathbf{Z}_s} ) $ acts as $ ( \me^{t \mathbf{Z}_s} ) ^{ \odot n} $ on the $ n $-particle subspace, subtracting its truncation gives
\begin{equation*}
    \Gamma ( \me^{t \mathbf{Z}_s} ) - \Gamma ( \me^{t \mathbf{Z}_s} ) Q_{\leq r}
    = \underbrace{0 \oplus \cdots \oplus 0}_{n = 0, \ldots, r}
    \oplus \bigoplus_{n = r + 1}^\infty ( \me^{t \mathbf{Z}_s} ) ^{ \odot n}.
\end{equation*}
The norm of this orthogonal direct sum is the supremum of its block norms. The symmetric tensor norm identity in \cite[Lemma~3.1]{ArakiSinhaSunder1995} and strict contractivity therefore give
\begin{equation}
    \label{eq-chaos-tail}
        \norm{\Gamma ( \me^{t \mathbf{Z}_s} ) - \Gamma ( \me^{t \mathbf{Z}_s} ) Q_{\leq r}} = \sup_{n \geq r + 1} \norm{( \me^{t \mathbf{Z}_s} ) ^{ \odot n}} = \norm{\me^{t \mathbf{Z}_s}}^{r + 1} \longrightarrow 0, \qquad r \to \infty.
\end{equation}
Since $ Q_{\leq r} $ has finite rank and $ s $ and $ t > 0 $ were arbitrary, this proves \ref{condition-immediate}. Moreover, $ \me^{t \mathbf{Z}_s} $ is trace class on $ \C^{2d} $, so \cite[Theorem~3.10(a)]{ArakiSinhaSunder1995} proves \ref{condition-trace-class}.

We next establish the remaining implications. Trace-class membership implies compactness, so \ref{condition-trace-class} implies \ref{condition-immediate}, which implies \ref{condition-one-time}. By \cite[Theorem~II.4.29]{EngelNagel2000}, immediate compactness gives compact resolvent, so \ref{condition-immediate} also implies \ref{condition-resolvent}.

Suppose that \ref{condition-one-time} holds, and choose the corresponding $ s $ and $ t > 0 $. The restriction of $ T_t^{ ( s ) } $ to $ \ker L^{ ( s ) } $ is the identity, so compactness forces this kernel to be finite dimensional. Thus \ref{condition-one-time} implies \ref{condition-kernel}. Similarly, if \ref{condition-resolvent} holds for some $ s $, then, for $ \lambda > 0 $, the restriction of $ ( \lambda - L^{ ( s ) } ) ^{ - 1} $ to the kernel is $ \lambda^{ - 1}\1 $. A nonzero scalar multiple of the identity is compact only on a finite-dimensional space, so \ref{condition-resolvent} implies \ref{condition-kernel}. Finally, Lemma~\ref{lemma-infinite-fixed-space} gives \ref{condition-kernel} $ \Rightarrow $ \ref{condition-stable}. Together with the cited equivalence of \ref{condition-stable} and \ref{condition-unique}, this proves all seven conditions equivalent.

It remains to identify the kernels. Assume that the drift is stable and fix any $ s \in [ 0, 1 ] $. The vacuum vector $ e ( 0 ) \in \Gamma ( \C^{2d} ) $ is fixed by second quantization. Since $ \norm{\me^{t \mathbf{Z}_s}} \leq 1 $, we have
\begin{equation*}
    \norm{\Gamma ( \me^{t \mathbf{Z}_s} ) - \ketbra{e ( 0 )}{e ( 0 )}}
    = \sup_{n \geq 1} \norm{\me^{t \mathbf{Z}_s} }^n = \norm{\me^{t \mathbf{Z}_s} } \longrightarrow 0.
\end{equation*}
Since $ U_s e ( 0 ) = \rho^{1/2} $, the limiting projection on $ \cH $ is $ \ketbra{\rho^{1/2}}{\rho^{1/2}} $. Every vector in $ \ker L^{ ( s ) } $ is fixed by $ T_t^{ ( s ) } $ for all $ t \geq 0 $; passing to the limit as $ t \to \infty $ shows that it belongs to $ \C \rho^{1/2} $. Conversely, $ \rho^{1/2} $ is fixed by the semigroup and hence belongs to the generator kernel. Thus $ \ker L^{ ( s ) } = \C \rho^{1/2} $ for every $ s $. In the nonstable case, every kernel is infinite dimensional by Lemma~\ref{lemma-infinite-fixed-space}.
\end{proof}

\begin{remark}\label{remark-no-general-equivalence}
Reality of the Gaussian drift is essential in the proof of necessity. For a general complex one-particle generator, compact resolvent need not imply compactness of the semigroup. For example, on $ \Gamma ( \C ) $, the generator $ \mi N = \lambda ( \mi ) $ has compact resolvent because $ ( \eta - \mi n ) ^{ - 1} \to 0 $ for every $ \eta > 0 $, whereas $ \Gamma ( \me^{\mi t} ) $ is unitary and noncompact. For a real Gaussian drift, each imaginary eigenvalue has its conjugate, which produces the infinitely many fixed tensors used in Lemma~\ref{lemma-infinite-fixed-space}.
\end{remark}

\begin{example}[A faithful invariant state without uniqueness]\label{example-nonunique}
On one mode, let $ H = \omega N $, where $ \omega \in \R $, and set
$ \cT_t ( X ) = \me^{\mi tH} X \me^{ - \mi tH} $. This Gaussian QMS has drift $ Zz = \mi \omega z $ and zero diffusion. Every thermal density $ \rho_{\beta} $ in \eqref{eq-thermal-density} is a faithful normal Gaussian invariant state. More generally, every normal state diagonal in the number basis is invariant.

For the one-mode number vectors $ \ket{n} $, $ n \in \N $, we have $ H \ket{n} = \omega n \ket{n} $, hence $ \cT_t ( \ketbra{n}{n} ) = \ketbra{n}{n} $. For the embedding associated with $ \rho_{\beta} $,
\begin{equation*}
    \rho_{\beta}^{s/2} \ketbra{n}{n} \rho_{\beta}^{ ( 1 - s ) /2} = \sqrt{1 - \me^{ - \beta}} \me^{ - \beta n/2} \ketbra{n}{n}.
\end{equation*}
The coefficient is positive, so \eqref{eq-construction-induced-semigroup} gives $ T_t^{ ( s ) } \ketbra{n}{n} = \ketbra{n}{n} $ for every $ t \geq 0 $ and $ s \in [ 0, 1 ] $. Moreover, $ \langle \ketbra{m}{m}, \ketbra{n}{n} \rangle_2 = \delta_{mn} $, so $ ( \ketbra{n}{n} ) _{n \in \N} $ is an orthonormal sequence in $ \cH $. For $ m \neq n $,
\begin{equation*}
    \norm{\ketbra{n}{n} - \ketbra{m}{m}}_2 = \sqrt{2}.
\end{equation*}
Thus the image of this bounded sequence has no norm-convergent subsequence. A compact operator maps every bounded sequence to one with a convergent subsequence, so $ T_t^{ ( s ) } $ is noncompact for every $ t > 0 $.

These fixed vectors belong to the generator domain and satisfy $ L^{ ( s ) } \ketbra{n}{n} = 0 $. Hence, for $ \eta > 0 $,
\begin{equation*}
    ( \eta \1 - L^{ ( s ) } ) ^{ - 1} \ketbra{n}{n} = \eta^{ - 1} \ketbra{n}{n}.
\end{equation*}
The resolvent is noncompact for the same reason. Tensoring this oscillator with a stable dissipative Gaussian QMS with faithful invariant density $ \rho_{-} $ preserves the obstruction: the vectors $ \ketbra{n}{n} \otimes \rho_{-}^{1/2} $ form an orthonormal sequence fixed by the induced semigroup in \eqref{eq-induced-factorization}. Both constructions are elementary instances of Proposition~\ref{proposition-state-independence}.
\end{example}

\subsection{Trace and norm formulas}\label{section-traces}

Under the equivalent conditions of Theorem~\ref{theorem-main}, the strict one-particle contractions give exact trace-norm and Hilbert--Schmidt norm formulas. They also determine the trace, independently of the embedding parameter.

\begin{corollary}\label{corollary-traces}
Assume the equivalent conditions of Theorem~\ref{theorem-main}. For $ t > 0 $ and $ s \in [ 0, 1 ] $, we have
\begin{align}
    \norm{T_t^{ ( s ) }}_{\cS_1}
    & = \det ( \1_{2d} - \abs{\me^{t \mathbf{Z}_s}} ) ^{ - 1}, \label{eq-all-s-schatten}\\*
    \norm{T_t^{ ( s ) }}_{\cS_2}^{2}
    & = \det \bigl ( \1_{2d} - ( \me^{t \mathbf{Z}_s} ) ^* \me^{t \mathbf{Z}_s} \bigr ) ^{ - 1}, \label{eq-all-s-hilbert-schmidt}\\*
    \tr_{\cH} ( T_t^{ ( s ) } )
    & = \det ( \1_{2d} - \me^{t \mathbf{Z}} ) ^{ - 1}.\label{eq-all-s-trace}
\end{align}
Moreover, the operator norm on $ \cH $ satisfies
\begin{equation}
    \label{eq-all-s-centered-bound}
    \norm{T_t^{ ( s ) } - \ketbra{\rho^{1/2}}{\rho^{1/2}}} = \norm{\me^{t \mathbf{Z}_s}} < 1, \qquad t > 0.
\end{equation}
\end{corollary}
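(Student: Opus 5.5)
By Theorem~\ref{theorem-main} and Remark~\ref{remark-reference-state}, the invariant state is Gaussian and unique, so Theorem~\ref{theorem-all-s-representation} applies with $ \rho $ itself. Hence $ T_t^{(s)} = U_s \Gamma(B) U_s^* $ with $ B := \me^{t \mathbf{Z}_s} $ and $ U_s $ unitary. Trace norm, Hilbert--Schmidt norm, trace and operator norm are all unitarily invariant, so every claim reduces to a statement about $ \Gamma(B) $ on $ \Gamma(\C^{2d}) $. The proof of Theorem~\ref{theorem-main} gives $ \norm{B} < 1 $ for $ t > 0 $, and this is the one fact about $ B $ that I will use throughout.

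The main tool is the block decomposition $ \Gamma(B) = \bigoplus_n B^{\odot n} $ together with functoriality of symmetric tensor powers. For the trace norm, I first note that $ \Gamma(B)^*\Gamma(B) = \Gamma(B^*B) $. Since $ B^*B \ge 0 $, functoriality gives $ \abs{\Gamma(B)} = \Gamma(\abs{B}) $. Diagonalize $ \abs{B} $ with singular values $ \sigma_1, \dots, \sigma_{2d} \in [0,1) $. The occupation-number basis built from the corresponding eigenvectors then diagonalizes $ \Gamma(\abs{B}) $, with eigenvalues $ \prod_k \sigma_k^{m_k} $ for $ m \in \N^{2d} $. Summing the geometric series gives
\begin{equation*}
\tr \Gamma(\abs{B}) = \prod_k (1-\sigma_k)^{-1} = \det(\1_{2d} - \abs{B})^{-1},
\end{equation*}
which is \eqref{eq-all-s-schatten}. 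Applying the same computation to $ \Gamma(B^*B) $, whose eigenvalues are $ \prod_k \sigma_k^{2m_k} $, yields $ \norm{\Gamma(B)}_{\cS_2}^2 = \tr \Gamma(B^*B) = \det(\1 - B^*B)^{-1} $, which is \eqref{eq-all-s-hilbert-schmidt}.

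For the trace identity \eqref{eq-all-s-trace}, $ \Gamma(B) $ is trace class, so $ \tr \Gamma(B) = \sum_n \tr B^{\odot n} $ converges absolutely. The main obstacle is that $ B $ need not be diagonalizable or normal. I would handle this with a Schur triangularization $ B = Q R Q^* $, where $ Q $ is unitary and $ R $ is upper triangular with diagonal entries $ \mu_1, \dots, \mu_{2d} $, the eigenvalues of $ B $. Then $ \Gamma(B) = \Gamma(Q)\Gamma(R)\Gamma(Q)^* $ with $ \Gamma(Q) $ unitary. In the occupation basis of the Schur basis, ordered lexicographically within each $ n $-particle space, $ R^{\odot n} $ is triangular with diagonal entries $ \prod_k \mu_k^{m_k} $. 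This gives $ \tr B^{\odot n} = h_n(\mu) $, the complete homogeneous symmetric polynomial of degree $ n $. Summing, and using $ \abs{\mu_k} \le \norm{B} < 1 $ for convergence, gives
\begin{equation*}
\sum_n h_n(\mu) = \prod_k (1-\mu_k)^{-1} = \det(\1_{2d} - B)^{-1}.
\end{equation*}
Finally, $ B $ is similar to $ \me^{t\mathbf{Z}} $ via $ \mathbf{S}_s^{1/2} $, so the determinant equals $ \det(\1_{2d} - \me^{t\mathbf{Z}})^{-1} $, independent of $ s $.

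For \eqref{eq-all-s-centered-bound}, recall from \eqref{eq-all-s-unitary} that $ U_s e(0) = \rho^{1/2} $, since $ \chi_\rho(0) = 1 $. Hence $ U_s^*\ketbra{\rho^{1/2}}{\rho^{1/2}}U_s $ is the vacuum projection. The difference $ \Gamma(B) - \ketbra{e(0)}{e(0)} $ is therefore $ 0 \oplus \bigoplus_{n\ge1} B^{\odot n} $. By \cite[Lemma~3.1]{ArakiSinhaSunder1995} its norm is $ \sup_{n\ge1}\norm{B}^n = \norm{B} $, which is strictly less than $ 1 $ as established in the proof of Theorem~\ref{theorem-main}.
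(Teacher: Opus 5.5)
Your proof is correct. The reduction to $\Gamma(B)$ with $B = \me^{t\mathbf{Z}_s}$, the similarity step for the determinant, and the vacuum-projection argument for \eqref{eq-all-s-centered-bound} (using $U_s e(0) = \rho^{1/2}$ and \cite[Lemma~3.1]{ArakiSinhaSunder1995}) are the same as in the paper.

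The difference is in the three norm and trace formulas. The paper imports these directly from Araki--Sinha--Sunder: \cite[Corollary~3.8]{ArakiSinhaSunder1995} for the two norms and \cite[Remark~3.7(1)]{ArakiSinhaSunder1995} for the trace. You instead derive them in finite dimensions. For the norms you use $\abs{\Gamma(B)} = \Gamma(\abs{B})$ together with a singular-value occupation basis and geometric series. For the trace you use a Schur triangularization giving $\tr B^{\odot n} = h_n(\mu)$.

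The citation is shorter and covers arbitrary one-particle contractions. Your route is self-contained and buys two things. First, finiteness of $\tr \Gamma(\abs{B})$ already shows that $T_t^{(s)}$ is trace class, so you do not need \cite[Theorem~3.10(a)]{ArakiSinhaSunder1995} either. Second, your trace computation is the same triangular-block calculation the paper carries out in Corollary~\ref{corollary-spectrum}. It therefore makes visible that $\tr_{\cH}(T_t^{(s)}) = \sum_{\alpha \in \N^{2d}} \me^{t \sum_j \alpha_j \lambda_j}$, matching the spectral data there.

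Your lexicographic ordering does make $R^{\odot n}$ triangular: replacing any factor $u_j$ by some $u_i$ with $i < j$ strictly increases the multi-index lexicographically. For the trace, however, you only need the diagonal entries $\prod_k \mu_k^{m_k}$.
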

\begin{proof}
By Theorem~\ref{theorem-all-s-representation}, the two norm formulas follow from \cite[Corollary~3.8]{ArakiSinhaSunder1995}. The trace formula in \cite[Remark~3.7(1)]{ArakiSinhaSunder1995} initially gives $ \det ( \1_{2d} - \me^{t \mathbf{Z}_s} ) ^{ - 1} $. Equation~\eqref{eq-definition-A-s} shows that $ \me^{t \mathbf{Z}_s} $ and $ \me^{t \mathbf{Z}} $ are similar, so
\begin{equation*}
    \det ( \1_{2d} - \me^{t \mathbf{Z}_s} ) = \det ( \1_{2d} - \me^{t \mathbf{Z}} ).
\end{equation*}
This gives the trace identity in terms of the original drift $ \mathbf{Z} $. Equation~\eqref{eq-all-s-centered-bound} is the tensor-norm calculation in the proof of Theorem~\ref{theorem-main}.
\end{proof}

The trace norm and Hilbert--Schmidt norm involve the singular values of $ \me^{t \mathbf{Z}_s} $ and generally depend on $ s $. The trace depends only on the eigenvalues of the drift. The same one-particle matrix also determines the generator spectrum.

\subsection{Spectrum and algebraic multiplicities}\label{subsection-generator-spectrum}

For one mode, the following spectral formula recovers the full spectrum established in \cite[Sections~3 and~5]{fagnola_li_2025spectral}. The spectral formula for symmetric second quantization is already known from van Neerven's work \cite[Theorem~3.3 and Corollary~4.4, with $ p = 2 $]{vanNeerven2005}. We reproduce the finite-dimensional calculation in our notation for completeness and readability, and spell out the elementary count of algebraic multiplicities.

\begin{corollary}[Spectrum and algebraic multiplicities]\label{corollary-spectrum}
Assume the equivalent conditions of Theorem~\ref{theorem-main}, and let $ \lambda_1, \ldots, \lambda_{2d} $ be the eigenvalues of $ \mathbf{Z} $, repeated according to algebraic multiplicity. Then, for every $ s \in [ 0, 1 ] $,
\begin{equation}
    \label{eq-all-s-generator-spectrum}
    \sigma ( L^{ ( s ) } ) =
    \left\{\sum_{j = 1}^{2d} \alpha_j \lambda_j : \alpha \in \N^{2d} \right\}.
\end{equation}
Every spectral value is an isolated eigenvalue of finite algebraic multiplicity. The algebraic multiplicity of $ \mu $ is
\begin{equation}
    \label{eq-algebraic-multiplicity}
    \#\left\{\alpha \in \N^{2d} : \sum_{j = 1}^{2d} \alpha_j \lambda_j = \mu \right\}.
\end{equation}
In particular, the spectrum and its algebraic multiplicities are independent of $ s $. The matrix $ \mathbf{Z} $ need not be diagonalizable.
\end{corollary}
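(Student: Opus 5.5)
By Corollary~\ref{corollary-generator-domain}, $L^{(s)}$ is unitarily equivalent to the orthogonal direct sum $\bigoplus_{n\ge0}\mathbf{Z}_{s,n}$, with each $\mathbf{Z}_{s,n}$ acting on the finite-dimensional space $(\C^{2d})^{\odot n}$. The plan is to compute the spectrum and algebraic multiplicities of each block, and then to pass to the direct sum. Passing to the sum requires the resolvent to be uniformly bounded away from the block spectra. That uniform control is available because the drift is stable.

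\textbf{Step 1: block spectra.} Since $\mathbf{Z}_s$ is similar to $\mathbf{Z}$, it has eigenvalues $\lambda_1,\dots,\lambda_{2d}$. Choose a basis $v_1,\dots,v_{2d}$ of $\C^{2d}$ in which $\mathbf{Z}_s$ is upper triangular (a Schur or Jordan basis), ordered so that $\mathbf{Z}_s v_k \in \lambda_k v_k + \myspan\{v_1,\dots,v_{k-1}\}$. Then the symmetric tensors $v^{\odot\alpha} := v_1^{\odot\alpha_1}\odot\cdots\odot v_{2d}^{\odot\alpha_{2d}}$ with $|\alpha| = n$ form a basis of $(\C^{2d})^{\odot n}$. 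Applying $\mathbf{Z}_s$ factor by factor gives
\[
\mathbf{Z}_{s,n} v^{\odot\alpha} = \Bigl(\sum_j \alpha_j\lambda_j\Bigr) v^{\odot\alpha} + (\text{terms } v^{\odot\beta} \text{ with } \beta \text{ obtained by moving one index to a smaller value}).
\]
Order multi-indices by reverse-lexicographic or weighted order; then $\mathbf{Z}_{s,n}$ is triangular in this basis. Hence its characteristic polynomial is $\prod_{|\alpha|=n}(x - \sum_j\alpha_j\lambda_j)$, which gives the eigenvalues of each block together with their algebraic multiplicities. The same conclusion also follows from $\me^{t\mathbf{Z}_{s,n}} = (\me^{t\mathbf{Z}_s})^{\odot n}$ and the known eigenvalues of symmetric powers.

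\textbf{Step 2: only finitely many blocks contribute near any point.} Let $\delta := \min_j(-\Re\lambda_j) > 0$, so every eigenvalue of $\mathbf{Z}_{s,n}$ has real part at most $-n\delta$. We need the resolvent on high blocks to be uniformly bounded. Use Corollary~\ref{corollary-traces} or the strict contraction $\|\me^{t_0\mathbf{Z}_s}\| = q < 1$ from the proof of Theorem~\ref{theorem-main}. With it, $\|\me^{t\mathbf{Z}_{s,n}}\| \le \|\me^{t\mathbf{Z}_s}\|^n \le q^{n\lfloor t/t_0\rfloor}$, which gives an exponential bound $\|\me^{t\mathbf{Z}_{s,n}}\| \le C\me^{-n\epsilon t}$ with constants $C = q^{-1}$ and $\epsilon = -\log q/t_0$ independent of $n$. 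The Laplace transform then gives $\|(\mu - \mathbf{Z}_{s,n})^{-1}\| \le C/(\Re\mu + n\epsilon)$ whenever $\Re\mu > -n\epsilon$.

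Now fix a bounded region $K$ in $\C$. For all $n$ with $n\epsilon > \sup_K|\Re\mu| + 1$, these resolvents are uniformly bounded on $K$. The finitely many remaining blocks have finite spectra. So for $\mu$ outside the countable set in \eqref{eq-all-s-generator-spectrum}, the block resolvents are uniformly bounded, and their direct sum is a bounded inverse of $\mu - L^{(s)}$ on the domain \eqref{eq-second-quantized-domain}. Conversely, each $\sum_j\alpha_j\lambda_j$ is an eigenvalue of some block and hence of $L^{(s)}$. This proves \eqref{eq-all-s-generator-spectrum}.

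\textbf{Step 3: isolation and multiplicity.} Local finiteness from Step 2 shows that each point $\mu$ of the spectrum is isolated. The Riesz projection of $L^{(s)}$ at $\mu$ is the direct sum of the Riesz projections of the finitely many blocks that contain $\mu$. Its rank is therefore the sum of the block multiplicities from Step 1, namely the count \eqref{eq-algebraic-multiplicity}. That count is finite because $\Re\sum_j\alpha_j\lambda_j \le -|\alpha|\delta$. Every quantity involved depends only on the $\lambda_j$, so the result is independent of $s$. The main obstacle is the uniform-in-$n$ resolvent bound in Step 2. A pure eigenvalue argument is not enough there, because the triangular parts of the blocks grow with $n$; the semigroup contraction estimate is what supplies the uniform constant.
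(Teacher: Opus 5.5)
Your Steps~1 and~3 are sound. Step~1 is the paper's Schur-basis triangularization, and counting ranks of Riesz projections is equivalent to the paper's count of generalized eigenvectors. Step~2, however, which you yourself flag as the crux, rests on a false estimate.

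From $\norm{\me^{t\mathbf{Z}_{s,n}}}\le q^{n\lfloor t/t_0\rfloor}$ and $\lfloor t/t_0\rfloor > t/t_0-1$ you only get $\norm{\me^{t\mathbf{Z}_{s,n}}}\le q^{-n}\me^{-n\epsilon t}$. The constant is $q^{-n}$, not $q^{-1}$ (take $t$ just below $t_0$). So the uniform bound $C/(\Re\mu+n\epsilon)$ on the block resolvents does not follow.

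No choice of constants rescues this form of estimate. Evaluating on $v^{\otimes n}$ for a maximizing unit vector $v$ gives exactly $\norm{\me^{t\mathbf{Z}_{s,n}}}=\norm{\me^{t\mathbf{Z}_s}}^n$. A bound $C\me^{-n\epsilon t}$ valid for all $n$ therefore forces $\norm{\me^{t\mathbf{Z}_s}}\le C^{1/n}\me^{-\epsilon t}\to\me^{-\epsilon t}$ for all $t\ge0$. Differentiating at $t=0$ then gives $\mathbf{K}_s\ge\epsilon\1_{2d}$, that is, $\gap L^{(s)}>0$ by Proposition~\ref{proposition-gap-formula}. Example~\ref{example-coupled-modes} satisfies all hypotheses of the corollary yet has $\gap L^{(s)}=0$ for every $s$. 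Your uniform estimate thus fails in exactly the zero-gap regime the corollary must cover.

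The conclusion you want from Step~2 is still true, and you can reach it in two ways.

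\emph{Direct repair.} For $\Re\mu\ge -R$ with $R\ge0$, split the Laplace integral over the intervals $[kt_0,(k+1)t_0)$, on which $\norm{\me^{t\mathbf{Z}_{s,n}}}\le q^{nk}$. This gives
\[
\norm{(\mu-\mathbf{Z}_{s,n})^{-1}}\le\sum_{k\ge0}t_0\me^{R(k+1)t_0}q^{nk}=\frac{t_0\me^{Rt_0}}{1-\me^{Rt_0}q^{n}},
\]
which is bounded uniformly once $\me^{Rt_0}q^n\le 1/2$. The first interval now contributes $O(1)$ instead of a factor $q^{-n}$.

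\emph{The paper's route.} Use condition~\ref{condition-resolvent} of Theorem~\ref{theorem-main}, which is among the hypotheses. Compact resolvent already makes every spectral value an isolated eigenvalue of finite algebraic multiplicity. The eigenvalue equation for $\lambda(\mathbf{Z}_s)$ is componentwise, so $\sigma(L^{(s)})=\bigcup_n\sigma(\mathbf{Z}_{s,n})$ without any resolvent estimate. Multiplicities then follow by decomposing the generalized eigenvectors, $(\lambda(\mathbf{Z}_s)-\mu)^kf=0$, componentwise. They live only in the finitely many blocks with $\delta n\le-\Re\mu$.

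If you keep your Riesz-projection Step~3, the uniform bounds also come for free on any contour in the resolvent set. Each particle subspace reduces $\lambda(\mathbf{Z}_s)$, so $\sup_n\norm{(\zeta-\mathbf{Z}_{s,n})^{-1}}\le\norm{(\zeta-\lambda(\mathbf{Z}_s))^{-1}}$.
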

\begin{proof}
Fix $ s \in [ 0, 1 ] $. Corollary~\ref{corollary-generator-domain} identifies the generator on Fock space:
\begin{equation*}
    U_s^* L^{ ( s ) } U_s = \lambda ( \mathbf{Z}_s ) = \bigoplus_{n = 0}^\infty \mathbf{Z}_{s, n}.
\end{equation*}
We first explain why these finite-dimensional blocks determine the full spectrum. Compact resolvent, established in Theorem~\ref{theorem-main}, implies that every spectral value is an isolated eigenvalue of finite algebraic multiplicity. For a vector $ f = \bigoplus_{n = 0}^\infty f_n $ in the generator domain, the eigenvalue equation is componentwise:
\begin{equation*}
    \lambda ( \mathbf{Z}_s ) f = \mu f
    \quad \Longleftrightarrow \quad
    \mathbf{Z}_{s, n} f_n = \mu f_n \quad \text{for every } n \geq 0.
\end{equation*}
If $ f \neq 0 $, at least one $ f_n $ is nonzero, so $ \mu $ is an eigenvalue of that block. Conversely, an eigenvector of a block, placed in its particle component with all other components zero, is a finite-particle vector. It belongs to the full generator domain and satisfies the same eigenvalue equation. Unitary equivalence therefore gives
\begin{equation*}
    \sigma ( L^{ ( s ) } ) = \bigcup_{n = 0}^\infty \sigma ( \mathbf{Z}_{s, n} ).
\end{equation*}

We next compute the eigenvalues and algebraic multiplicities of each block. Since $ \mathbf{Z}_s $ is similar to $ \mathbf{Z} $, its eigenvalues are $ \lambda_1, \ldots, \lambda_{2d} $. Schur's triangularization theorem gives an orthonormal basis $ u_1, \ldots, u_{2d} $ in which
\begin{equation*}
    \mathbf{Z}_s u_j = \lambda_j u_j + \sum_{i = 1}^{j - 1} \langle u_i, \mathbf{Z}_s u_j \rangle u_i, \qquad j = 1, \ldots, 2d.
\end{equation*}
The coefficients $ \langle u_i, \mathbf{Z}_s u_j \rangle $, with $ i < j $, are the entries above the diagonal of the matrix of $ \mathbf{Z}_s $ in this basis.
For $ n \geq 1 $, a basis of $ ( \C^{2d} ) ^{ \odot n} $ consists of the symmetric products
\begin{equation*}
    u_1^{ \odot \alpha_1} \odot \cdots \odot u_{2d}^{ \odot \alpha_{2d}},
    \qquad \alpha \in \N^{2d}, \quad \abs{\alpha} := \sum_{j = 1}^{2d} \alpha_j = n.
\end{equation*}
The tensor-factor sum defining $ \mathbf{Z}_{s, n} $ applies $ \mathbf{Z}_s $ to one factor at a time. The $ \alpha_j $ factors equal to $ u_j $ contribute $ \alpha_j \lambda_j $ to the coefficient of the original basis vector. Every other term replaces one $ u_j $ by some $ u_i $ with $ i < j $. Order the basis by increasing $ \sum_{j = 1}^{2d} j \alpha_j $, breaking ties arbitrarily. Each such replacement strictly decreases this quantity, so all the other terms involve earlier basis vectors. Thus the block matrix is upper triangular, with diagonal entry $ \sum_{j = 1}^{2d} \alpha_j \lambda_j $ at the vector indexed by $ \alpha $. Its characteristic polynomial is consequently
\begin{equation*}
    \det ( z \1 - \mathbf{Z}_{s, n} )
    = \prod_{\substack{\alpha \in \N^{2d} \\ \abs{\alpha} = n}}
    \left( z - \sum_{j = 1}^{2d} \alpha_j \lambda_j \right).
\end{equation*}
The same formula holds for $ n = 0 $: the vacuum subspace is one dimensional and its generator is zero. Taking the union of the block spectra proves \eqref{eq-all-s-generator-spectrum}. The characteristic polynomial also shows that the algebraic multiplicity of $ \mu $ in the $ n $-particle block equals the number of multi-indices $ \alpha $ with $ \abs{\alpha} = n $ and $ \sum_j \alpha_j \lambda_j = \mu $.

It remains to show that these block multiplicities add up to the algebraic multiplicity for the full generator. Fix a spectral value $ \mu $ and put $ \delta := \min_j ( - \Re \lambda_j ) > 0 $. If a multi-index $ \alpha $ contributes to $ \mu $ in the $ n $-particle block, then
\begin{equation*}
    - \Re \mu = \sum_{j = 1}^{2d} \alpha_j ( - \Re \lambda_j ) \geq \delta \abs{\alpha} = \delta n.
\end{equation*}
Hence only finitely many particle numbers can contribute to $ \mu $.

Applying the direct-sum generator formula repeatedly, any generalized eigenvector $ f = \bigoplus_{n = 0}^\infty f_n $ satisfying $ ( \lambda ( \mathbf{Z}_s ) - \mu \1 ) ^k f = 0 $ for some $ k \geq 1 $ has
\begin{equation*}
    ( \mathbf{Z}_{s, n} - \mu \1 ) ^k f_n = 0, \qquad n \geq 0.
\end{equation*}
If $ \mu $ is not an eigenvalue of a block, the corresponding matrix $ \mathbf{Z}_{s, n} - \mu \1 $ is invertible, and therefore $ f_n = 0 $. Thus every generalized eigenvector has components only in the finitely many contributing blocks. Conversely, a finite sum of block generalized eigenvectors is a finite-particle vector, belongs to every generator-power domain, and is annihilated by a sufficiently large common power of $ \lambda ( \mathbf{Z}_s ) - \mu \1 $. The full generalized eigenspace is therefore the direct sum of the generalized eigenspaces of these blocks. Their dimensions are the block algebraic multiplicities computed above, so adding them proves \eqref{eq-algebraic-multiplicity}. Finally, both formulas depend only on the eigenvalues of $ \mathbf{Z} $, and hence are independent of $ s $. Schur's theorem applies to every complex matrix, so no diagonalizability assumption is needed.
\end{proof}

The one-mode analysis in \cite{fagnola_li_2025spectral} gives more detailed information in the original operator and phase-space variables. Its polynomial filtration constructs eigenvectors and Jordan chains from polynomials in the creation and annihilation operators, while its characteristic-function representation proves regularization and immediate compactness. It also determines the spectrum of the self-adjoint closure of the symmetric part and the exact coercive spectral gap. These results explain the eigenvectors, regularity, and dissipation that are not specified by the eigenvalue list alone. The present Fock-space argument complements that analysis by giving a common structural proof of the spectral formula for every finite number of modes.

\section{Coercive spectral gaps and compactness} \label{section-gaps-compactness}

This section is motivated by the one-mode $ L^2 $ analysis in \cite[Proposition~4.8]{fagnola_li_2025spectral}. Under its standing assumptions of drift stability and a faithful normal invariant state, a positive spectral gap implies compact resolvent of the induced generator and its adjoint. We revisit this implication for finitely many modes under uniqueness of the normal invariant state. Using the second-quantization representation, Proposition~\ref{proposition-gap-resolvent} gives a direct proof through estimates on the particle subspaces and yields an explicit bound for finite-rank resolvent approximation.

We also show that uniqueness cannot be dropped when the gap is measured on the orthogonal complement of the full generator kernel. Example~\ref{example-gap-infinite-kernel} provides a Gaussian QMS with nonunique faithful normal invariant states and a positive coercive spectral gap for every embedding, while its induced generators have noncompact resolvents.

Throughout this section the Gaussian QMS has a faithful normal invariant state. If this state is not Gaussian, $ U_s $ and $ \mathbf{Z}_s $ are constructed from the Gaussian reference state in Proposition~\ref{proposition-state-independence}.

\subsection{Spectral gaps and decay estimates}\label{subsection-gaps-compactness}

We measure the gap on the orthogonal complement of the full generator kernel so that the definition also applies when the invariant state is not unique. For the generator $ L $ of a contraction semigroup on a Hilbert space, define
\begin{equation}
    \label{eq-definition-coercive-gap}
    \gap L := \inf \left\{ - \Re \langle Y, LY \rangle :
    Y \in \dom L \cap ( \ker L ) ^ \perp, \ \norm{Y} = 1 \right\}.
\end{equation}
We say that $ L $ has a \emph{spectral gap} when $ \gap L > 0 $. In general, this coercive spectral gap differs from the distance of the nonzero spectrum to the imaginary axis. For a self-adjoint generator, the two quantities agree. Indeed, $ L \leq 0 $, and $ ( \ker L ) ^\perp $ is a reducing subspace. The spectral theorem identifies the infimum of the Rayleigh quotients of $ - L $ on this subspace with the bottom of the spectrum of its restriction. Consequently,
\begin{equation*}
    \gap L = \inf \{ - \lambda : \lambda \in \sigma ( L ) \setminus \{ 0 \} \}.
\end{equation*}
Since $ \sigma ( L ) \subset ( - \infty, 0 ] $, this is the distance of the nonzero spectrum from the imaginary axis. If nonzero spectral values accumulate at zero, both quantities vanish.

Suppose first that the Gaussian QMS has a unique faithful normal invariant state. By Theorem~\ref{theorem-main}, $ \ker L^{ ( s ) } = \C \rho^{1/2} $ for every $ s $. In this case, $ \gap L^{ ( s ) } \geq c $ is equivalent to
\begin{equation}
    \label{eq-gap-prefactor-one}
    \norm{T_t^{ ( s ) } Y}_2 \leq \me^{ - ct} \norm{Y}_2,
    \qquad t \geq 0, \quad Y \perp \rho^{1/2}.
\end{equation}
Indeed, the unit vector $ \rho^{1/2} $ is fixed by each contraction $ T_t^{ ( s ) } $ and therefore also by its adjoint. Its orthogonal complement is consequently invariant. On the generator domain, differentiation of the squared norm and integration of the resulting differential inequality give \eqref{eq-gap-prefactor-one}; density then extends the estimate to the whole orthogonal complement. Conversely, differentiation of \eqref{eq-gap-prefactor-one} at zero gives the coercive bound.

Define the Hermitian matrix
\begin{equation}
    \label{eq-drift-dissipative-part}
    \mathbf{K}_s := - \frac{\mathbf{Z}_s + \mathbf{Z}_s^*}{2} \geq 0.
\end{equation}
Positivity follows from contractivity of the one-particle semigroup in Theorem~\ref{theorem-all-s-representation}.

\begin{proposition}\label{proposition-gap-formula}
If $ \mathbf{Z} $ is stable, then for every $ s \in [ 0, 1 ] $,
\begin{equation}
    \label{eq-stable-gap-value}
    \gap L^{ ( s ) } = \min \sigma ( \mathbf{K}_s ).
\end{equation}
\end{proposition}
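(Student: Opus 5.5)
Under stability, Theorem~\ref{theorem-main} gives $\ker L^{(s)} = \C\rho^{1/2}$. Since $U_s e(0) = \rho^{1/2}$, conjugation by $U_s$ turns $(\ker L^{(s)})^\perp$ into the orthogonal complement of the vacuum, namely $\bigoplus_{n\ge1}(\C^{2d})^{\odot n}$. By Corollary~\ref{corollary-generator-domain} the generator there is $\bigoplus_{n\ge1}\mathbf{Z}_{s,n}$, and finite-particle vectors form a core. The plan is to reduce the gap to a computation on each particle block. For finite-particle $f$ we have $\Re\langle f,\lambda(\mathbf{Z}_s)f\rangle=\sum_n \Re\langle f_n,\mathbf{Z}_{s,n}f_n\rangle$. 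This sum is continuous in the graph norm, so the infimum in \eqref{eq-definition-coercive-gap} may be taken over finite-particle vectors orthogonal to the vacuum. Hence
\begin{equation*}
\gap L^{(s)}=\inf_{n\ge1}\min\sigma\bigl(\mathbf{K}_{s,n}\bigr),\qquad \mathbf{K}_{s,n}:=-\tfrac12(\mathbf{Z}_{s,n}+\mathbf{Z}_{s,n}^*).
\end{equation*}

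Next I identify $\mathbf{K}_{s,n}$. Taking adjoints factorwise, $\mathbf{Z}_{s,n}^*$ is the restriction to $(\C^{2d})^{\odot n}$ of $\sum_j \1\otimes\cdots\otimes\mathbf{Z}_s^*\otimes\cdots\otimes\1$. The symmetric subspace reduces this sum, since the sum commutes with the permutations, and those permutations are unitary. So $\mathbf{K}_{s,n}$ is the differential second quantization of $\mathbf{K}_s$ restricted to the $n$-particle subspace. Diagonalize the Hermitian matrix $\mathbf{K}_s\ge0$ in an orthonormal eigenbasis with eigenvalues $\kappa_1\le\dots\le\kappa_{2d}$. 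The symmetric products $u^{\odot\alpha}$ with $|\alpha|=n$ are then orthogonal eigenvectors with eigenvalues $\sum_j\alpha_j\kappa_j$, exactly as in the triangular argument of Corollary~\ref{corollary-spectrum}, but now diagonal. Thus $\min\sigma(\mathbf{K}_{s,n})=n\kappa_1$. The infimum over $n\ge1$ is $\kappa_1=\min\sigma(\mathbf{K}_s)$, attained at $n=1$ because $\kappa_1\ge0$.

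The step needing most care is the reduction of the infimum to finite-particle vectors and the blockwise decomposition. The domain of $L^{(s)}$ is described by \eqref{eq-second-quantized-domain}. For $f$ in it, the truncations $f^{(r)}$ converge in graph norm and remain orthogonal to the vacuum whenever $f_0=0$. Therefore $\langle f^{(r)},\lambda(\mathbf{Z}_s)f^{(r)}\rangle\to\langle f,\lambda(\mathbf{Z}_s)f\rangle$ and $\|f^{(r)}\|\to\|f\|$, and this justifies the reduction. The lower bound then reads $-\Re\langle f,\lambda(\mathbf{Z}_s)f\rangle=\sum_{n\ge1}\langle f_n,\mathbf{K}_{s,n}f_n\rangle\ge\kappa_1\sum_{n\ge1} n\|f_n\|^2\ge\kappa_1\|f\|^2$. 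For the upper bound, take $f=v\in\C^{2d}$ in the one-particle space, with $v$ a unit eigenvector of $\mathbf{K}_s$ for $\kappa_1$. This gives $-\Re\langle v,\mathbf{Z}_s v\rangle=\kappa_1$, and transporting through $U_s$ yields \eqref{eq-stable-gap-value}.
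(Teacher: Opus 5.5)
Your proposal is correct and follows essentially the same route as the paper. You identify the kernel with the vacuum via Theorem~\ref{theorem-main}, write the Hermitian part of $\mathbf{Z}_{s,n}$ as the restricted tensor sum of $\mathbf{K}_s$, pass from finite-particle truncations to the full domain by graph-norm convergence, and test with a one-particle eigenvector of $\mathbf{K}_s$ for the upper bound. The only difference is minor: you compute $\min\sigma(\mathbf{K}_{s,n}) = n\kappa_1$ exactly in a symmetric eigenbasis, whereas the paper only needs the lower bound $n g_s$, obtained by summing the $n$ factorwise inequalities before restricting to the symmetric subspace.
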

\begin{proof}
Fix $ s \in [ 0, 1 ] $ and put $ g_s := \min \sigma ( \mathbf{K}_s ) \geq 0 $. Since $ \mathbf{K}_s $ is Hermitian, $ \mathbf{K}_s \geq g_s \1_{2d} $. For $ n \geq 1 $, both the full tensor-factor sum defining $ \mathbf{Z}_{s, n} $ in Corollary~\ref{corollary-generator-domain} and its adjoint commute with permutations of the factors. The symmetric subspace is therefore invariant under both operators, so taking the adjoint commutes with restriction to that subspace. It follows that
\begin{equation*}
    - \frac{\mathbf{Z}_{s, n} + \mathbf{Z}_{s, n}^*}{2}
    = \left.\sum_{j = 1}^n \1^{ \otimes ( j - 1 ) } \otimes \mathbf{K}_s \otimes \1^{ \otimes ( n - j ) }\right|_{( \C^{2d} ) ^{ \odot n}}
    \geq n g_s \1.
\end{equation*}
In the tensor products, each $ \1 $ is the identity on $ \C^{2d} $; the $ \1 $ on the right-hand side of the displayed inequality is the identity on $ ( \C^{2d} ) ^{ \odot n} $. Before restriction, each summand is bounded below by $ g_s $ times the identity on the full tensor product $ ( \C^{2d} ) ^{ \otimes n} $. Summing these $ n $ inequalities and restricting gives the displayed bound. Thus, for every $ f_n \in ( \C^{2d} ) ^{ \odot n} $,
\begin{equation*}
    - \Re \langle f_n, \mathbf{Z}_{s, n} f_n \rangle
    = \left\langle f_n, - \frac{\mathbf{Z}_{s, n} + \mathbf{Z}_{s, n}^*}{2} f_n \right\rangle
    \geq n g_s \norm{f_n}^2.
\end{equation*}

We next pass from the particle subspaces to the full generator domain. By Theorem~\ref{theorem-main}, $ \ker L^{ ( s ) } = \C \rho^{1/2} $, and $ U_s e ( 0 ) = \rho^{1/2} $. Take $ Y \in \dom L^{ ( s ) } \cap ( \ker L^{ ( s ) } ) ^\perp $ and write $ f = U_s^* Y = \bigoplus_{n = 0}^\infty f_n $. Orthogonality to $ \rho^{1/2} $ means that $ f_0 = 0 $. For $ r \geq 1 $, let $ f^{ ( r ) } = \bigoplus_{n = 0}^r f_n $, with all higher particle components zero. These finite-particle vectors remain orthogonal to the vacuum. Orthogonality of the particle subspaces and the generator formula in Corollary~\ref{corollary-generator-domain} give
\begin{equation*}
    - \Re \langle U_s f^{ ( r ) }, L^{ ( s ) } U_s f^{ ( r ) } \rangle_2
    = \sum_{n = 1}^r - \Re \langle f_n, \mathbf{Z}_{s, n} f_n \rangle
    \geq g_s \sum_{n = 1}^r n \norm{f_n}^2
    \geq g_s \sum_{n = 1}^r \norm{f_n}^2 = g_s \norm{f^{ ( r ) }}^2.
\end{equation*}
Here the second inequality uses $ n \geq 1 $ and $ g_s \geq 0 $. The domain formula in Corollary~\ref{corollary-generator-domain} also gives
\begin{equation*}
    \norm{U_s f^{ ( r ) } - Y}_2^2
    + \norm{L^{ ( s ) } U_s f^{ ( r ) } - L^{ ( s ) } Y}_2^2
    = \sum_{n > r} \left( \norm{f_n}^2 + \norm{\mathbf{Z}_{s, n} f_n}^2 \right) \longrightarrow 0.
\end{equation*}
Thus both the vectors and their generator images converge in norm. Passing to the limit in the preceding scalar-product inequality yields
\begin{equation*}
    - \Re \langle Y, L^{ ( s ) } Y \rangle_2 \geq g_s \norm{Y}_2^2.
\end{equation*}
Taking the infimum over unit vectors $ Y $ in the definition \eqref{eq-definition-coercive-gap} proves $ \gap L^{ ( s ) } \geq g_s $.

To prove the reverse inequality, choose a unit eigenvector $ u \in \C^{2d} $ of the Hermitian matrix $ \mathbf{K}_s $ with $ \mathbf{K}_s u = g_s u $. Regard $ u $ as a Fock vector whose only nonzero component is its one-particle component. It is a finite-particle vector and is orthogonal to the vacuum. Hence $ U_s u $ belongs to $ \dom L^{ ( s ) } \cap ( \ker L^{ ( s ) } ) ^\perp $ and has norm one. Since $ \mathbf{Z}_{s, 1} = \mathbf{Z}_s $, we obtain
\begin{equation*}
    - \Re \langle U_s u, L^{ ( s ) } U_s u \rangle_2
    = - \Re \langle u, \mathbf{Z}_s u \rangle
    = \langle u, \mathbf{K}_s u \rangle = g_s.
\end{equation*}
This admissible vector gives $ \gap L^{ ( s ) } \leq g_s $, proving the asserted equality. The argument also applies when $ g_s = 0 $.
\end{proof}

\begin{proposition}[A quantitative resolvent estimate]\label{proposition-gap-resolvent}
Suppose that the normal invariant state is unique, and fix $ s \in [ 0, 1 ] $ such that $ g_s := \gap L^{ ( s ) } > 0 $. For every $ \eta > 0 $ and $ r \in \N $, the resolvent $ ( \eta \1 - L^{ ( s ) } ) ^{ - 1} $ admits a finite-rank approximation obtained by keeping particle numbers $ 0, \ldots, r $, whose difference from the resolvent has operator norm at most $ ( \eta + ( r + 1 ) g_s ) ^{ - 1} $. In particular, $ L^{ ( s ) } $ and its adjoint have compact resolvent.
\end{proposition}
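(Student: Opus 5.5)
Uniqueness of the normal invariant state is condition \ref{condition-unique} of Theorem~\ref{theorem-main}, so the drift is stable and $\rho$ is Gaussian. Hence Theorem~\ref{theorem-all-s-representation}, Corollary~\ref{corollary-generator-domain} and Proposition~\ref{proposition-gap-formula} apply, and $g_s = \min\sigma(\mathbf{K}_s)$. We conjugate by $U_s$ and work with $\lambda(\mathbf{Z}_s) = \bigoplus_n \mathbf{Z}_{s,n}$. Since this operator is a direct sum of finite-dimensional blocks, the resolvent should also be block diagonal: $(\eta - \lambda(\mathbf{Z}_s))^{-1} = \bigoplus_n (\eta - \mathbf{Z}_{s,n})^{-1}$. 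We justify this by checking that the right-hand side is bounded, maps into the domain \eqref{eq-second-quantized-domain}, and inverts $\eta - \lambda(\mathbf{Z}_s)$ componentwise. The uniform bound needed here comes from the next step.

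The key estimate is the block-wise coercivity from the proof of Proposition~\ref{proposition-gap-formula}: $-\Re\langle f_n, \mathbf{Z}_{s,n} f_n\rangle \ge n g_s \norm{f_n}^2$. This gives
\begin{equation*}
    \Re\langle f_n, (\eta - \mathbf{Z}_{s,n}) f_n\rangle \ge (\eta + n g_s)\norm{f_n}^2 .
\end{equation*}
By Cauchy--Schwarz, $\norm{(\eta - \mathbf{Z}_{s,n}) f_n} \ge (\eta + n g_s)\norm{f_n}$. The block is a finite-dimensional matrix, so it is invertible, and $\norm{(\eta - \mathbf{Z}_{s,n})^{-1}} \le (\eta + n g_s)^{-1}$.

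For the finite-rank approximation, take $R_r := \bigoplus_{n \le r} (\eta - \mathbf{Z}_{s,n})^{-1}$, extended by zero on the higher particle subspaces, and transport it back by $U_s$. It has finite rank because each $(\C^{2d})^{\odot n}$ is finite dimensional. The difference from the resolvent is $\bigoplus_{n > r} (\eta - \mathbf{Z}_{s,n})^{-1}$. The norm of an orthogonal direct sum is the supremum of its block norms, so this difference has norm at most $\sup_{n \ge r+1} (\eta + n g_s)^{-1} = (\eta + (r+1) g_s)^{-1}$. This tends to $0$ as $r \to \infty$, so the resolvent is a norm limit of finite-rank operators and is therefore compact.

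For the adjoint, $((\eta - L^{(s)})^{-1})^* = (\eta - L^{(s)*})^{-1}$, and the adjoint of a compact operator is compact. Compactness at one $\eta > 0$ gives compact resolvent at every point of the resolvent set, by the resolvent identity.

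The only delicate step is the identification of the resolvent with the block-diagonal inverse on the full domain. I expect it to be routine, given the domain description in Corollary~\ref{corollary-generator-domain} and the uniform bound $(\eta + n g_s)^{-1} \le \eta^{-1}$, which makes $\sum_n \norm{\mathbf{Z}_{s,n}(\eta - \mathbf{Z}_{s,n})^{-1} h_n}^2$ finite for every $h = \bigoplus_n h_n$.
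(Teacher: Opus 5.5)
Your proposal is correct and follows essentially the same route as the paper. The common steps are the block-wise coercivity $-\Re\langle f_n,\mathbf{Z}_{s,n}f_n\rangle\ge n g_s\norm{f_n}^2$ obtained via Proposition~\ref{proposition-gap-formula}, the inverse bound $(\eta+ng_s)^{-1}$ on each particle subspace, the domain-based identification of the resolvent as a block-diagonal operator, and the tail estimate for the truncation to particle numbers $0,\ldots,r$; your $R_r$ coincides with the paper's $U_s^*R_{\eta,s}U_sQ_{\leq r}$, and your resolvent-identity remark is a harmless addition.
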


\begin{proof}
By Theorem~\ref{theorem-main}, $ \mathbf{Z} $ is stable. Proposition~\ref{proposition-gap-formula} therefore gives $ \mathbf{K}_s \geq g_s \1_{2d} $, or equivalently
\begin{equation}
    \label{eq-gap-one-particle-coercivity}
    - \Re \langle \xi, \mathbf{Z}_s \xi \rangle \geq g_s \abs{\xi}^2, \qquad \xi \in \C^{2d}.
\end{equation}
The tensor-factor formula \eqref{eq-second-quantized-generator} then yields
\begin{equation}
    \label{eq-gap-particle-coercivity}
    - \Re \langle f_n, \mathbf{Z}_{s, n} f_n \rangle \geq n g_s \norm{f_n}^2, \qquad f_n \in ( \C^{2d} ) ^{ \odot n}.
\end{equation}
Fix $ \eta > 0 $. For $ n \geq 1 $, the preceding inequality and Cauchy--Schwarz give
\begin{equation*}
    ( \eta + n g_s ) \norm{f_n}^2
    \leq \Re \langle f_n, ( \eta \1 - \mathbf{Z}_{s, n} ) f_n \rangle
    \leq \abs{\langle f_n, ( \eta \1 - \mathbf{Z}_{s, n} ) f_n \rangle}
    \leq \norm{f_n} \, \norm{( \eta \1 - \mathbf{Z}_{s, n} ) f_n}.
\end{equation*}
For $ f_n \neq 0 $, division by $ \norm{f_n} $ yields $ \norm{( \eta \1 - \mathbf{Z}_{s, n} ) f_n} \geq ( \eta + n g_s ) \norm{f_n} $. Hence $ \eta \1 - \mathbf{Z}_{s, n} $ is injective and, since the particle subspace is finite dimensional, invertible. Substituting $ f_n = ( \eta \1 - \mathbf{Z}_{s, n} ) ^{ - 1} h_n $ gives
\begin{equation*}
    \norm{( \eta \1 - \mathbf{Z}_{s, n} ) ^{ - 1} h_n} \leq \frac{\norm{h_n}}{\eta + n g_s}, \qquad
    \norm{( \eta \1 - \mathbf{Z}_{s, n} ) ^{ - 1}} \leq \frac{1}{\eta + n g_s}.
\end{equation*}
These estimates also hold for $ n = 0 $, since $ \mathbf{Z}_{s, 0} = 0 $ and its resolvent is $ \eta^{ - 1} \1 $.

Let $ R_{\eta, s} := ( \eta \1 - L^{ ( s ) } ) ^{ - 1} $. We next justify its componentwise representation using the domain in Corollary~\ref{corollary-generator-domain}. For $ h = \bigoplus_{n = 0}^{\infty} h_n \in \Gamma ( \C^{2d} ) $, put $ f_n = ( \eta \1 - \mathbf{Z}_{s, n} ) ^{ - 1} h_n $. The uniform bound $ \norm{f_n} \leq \eta^{ - 1} \norm{h_n} $ gives $ f = \bigoplus_{n = 0}^{\infty} f_n \in \Gamma ( \C^{2d} ) $. Moreover,
\begin{equation*}
    \mathbf{Z}_{s, n} f_n = \eta f_n - h_n, \qquad
    \sum_{n = 0}^{\infty} \norm{\mathbf{Z}_{s, n} f_n}^2
    \leq 2 \eta^2 \norm{f}^2 + 2 \norm{h}^2 < \infty.
\end{equation*}
Thus $ f $ belongs to the generator domain and solves $ ( \eta \1 - \lambda ( \mathbf{Z}_s ) ) f = h $. Consequently,
\begin{equation*}
    U_s^* R_{\eta, s} U_s = \bigoplus_{n = 0}^{\infty} ( \eta \1 - \mathbf{Z}_{s, n} ) ^{ - 1}.
\end{equation*}

Recall that $ Q_{\leq r} $ from \eqref{eq-chaos-tail} projects onto $ \bigoplus_{n = 0}^r ( \C^{2d} ) ^{ \odot n} $. Multiplication by $ \1 - Q_{\leq r} $ therefore removes the first $ r + 1 $ particle components:
\begin{equation*}
    U_s^* R_{\eta, s} U_s ( \1 - Q_{\leq r} )
    = \underbrace{0 \oplus \cdots \oplus 0}_{n = 0, \ldots, r}
    \oplus \bigoplus_{n = r + 1}^{\infty} ( \eta \1 - \mathbf{Z}_{s, n} ) ^{ - 1}.
\end{equation*}
The norm of an orthogonal direct sum is the supremum of its block norms. The inverse bounds above therefore imply
\begin{equation}
    \label{eq-gap-resolvent-tail}
    \norm{U_s^* R_{\eta, s} U_s ( \1 - Q_{\leq r} )}
    = \sup_{n \geq r + 1} \norm{( \eta \1 - \mathbf{Z}_{s, n} ) ^{ - 1}}
    \leq \frac{1}{\eta + ( r + 1 ) g_s} \longrightarrow 0, \qquad r \to \infty.
\end{equation}
Here the convergence uses $ g_s > 0 $. Since each particle subspace is finite dimensional, $ Q_{\leq r} $ has finite rank, and so does $ U_s^* R_{\eta, s} U_s Q_{\leq r} $. Subtracting this truncation from $ U_s^* R_{\eta, s} U_s $ gives precisely the tail in \eqref{eq-gap-resolvent-tail}. Hence the conjugated resolvent is a limit in operator norm of finite-rank operators and is compact. Conjugating back, the finite-rank operators $ R_{\eta, s} U_s Q_{\leq r} U_s^* $ approximate $ R_{\eta, s} $ with the same bound on the norm of the difference, proving the assertion. Finally, $ ( \eta \1 - ( L^{ ( s ) } ) ^* ) ^{ - 1} = R_{\eta, s}^* $ is compact because the adjoint of a compact operator is compact.
\end{proof}

\subsection{Examples separating compactness from spectral gaps}

The first example has a unique faithful normal invariant state and immediate compactness, but zero coercive spectral gap for every embedding. The second has nonunique faithful normal invariant states and a positive gap off the full kernel, while its induced semigroups and generator resolvents are noncompact. The examples therefore address both the converse implication and the role of uniqueness.

\begin{example}[Immediate compactness without a spectral gap]\label{example-coupled-modes}
We use the two-mode example in \cite[Example~3.9]{li2026kms}. Choose $ v > u > 0 $ and put
\begin{equation*}
    \nu = \frac{v^2 + u^2}{v^2 - u^2}, \qquad
    E = \begin{bmatrix} 1 & 1 \\ 1 & 1 \end{bmatrix}, \qquad
    \Omega = \begin{bmatrix} \omega_1 & h \\ h & \omega_2 \end{bmatrix},
\end{equation*}
where $ \omega_1, \omega_2, h \in \R $ and $ \omega_1 \neq \omega_2 $. There is one noise operator, and the Hamiltonian contains a quadratic pairing term:
\begin{equation}
    \label{eq-example-H-L}
    \begin{aligned}
        L_1 & = v ( a_1 + a_2 ) + u ( a_1^\dagger + a_2^\dagger ),\\
        H & = \sum_{j, k = 1}^2 \Omega_{jk} a_j^\dagger a_k
        + \frac{\mi u v}{2 \nu} \left( ( a_1^\dagger + a_2^\dagger ) ^2 - ( a_1 + a_2 ) ^2 \right).
    \end{aligned}
\end{equation}
The corresponding formal GKSL expression is
\begin{equation*}
    \cL ( X ) = \mi [ H, X ] + L_1^* X L_1
    - \frac{1}{2} L_1^* L_1 X - \frac{1}{2} X L_1^* L_1.
\end{equation*}
Set $ \alpha_\pm = ( v^2 - u^2 ) /2 \pm u v/\nu $. Both coefficients are positive, since $ v^2 + u^2 > 2 u v $. The drift and diffusion matrices are
\begin{equation}
    \label{eq-example-drift}
    \mathbf{Z} = \begin{bmatrix} - \alpha_+ E & - \Omega \\ \Omega & - \alpha_- E \end{bmatrix}, \qquad
    \mathbf{C} = \begin{bmatrix} ( v + u ) ^2 E & 0 \\ 0 & ( v - u ) ^2 E \end{bmatrix}.
\end{equation}
The symmetric part of the drift satisfies
\begin{equation}
    \label{eq-example-symmetric-part}
    \frac{\mathbf{Z} + \mathbf{Z}^*}{2}
    = - \begin{bmatrix} \alpha_+ E & 0 \\ 0 & \alpha_- E \end{bmatrix}, \qquad
    \ker ( \mathbf{Z} + \mathbf{Z}^* )
    = \myspan \{ ( 1, - 1, 0, 0 ) ^T, ( 0, 0, 1, - 1 ) ^T \}.
\end{equation}
Every eigenvalue of $ \mathbf{Z} $ has nonpositive real part. If $ \mathbf{Z} \xi = \lambda \xi $ with $ \Re \lambda = 0 $, then \eqref{eq-example-symmetric-part} forces $ \xi = ( x, - x, y, - y ) ^T $. Requiring $ \mathbf{Z} \xi $ to belong to the same kernel gives $ ( \omega_1 - \omega_2 ) x = ( \omega_1 - \omega_2 ) y = 0 $. Thus $ \xi = 0 $, a contradiction. Hence $ \mathbf{Z} $ is stable.

Put $ \beta = \log ( v^2/u^2 ) $. The centered Gaussian state with covariance $ \mathbf{S} = \nu \1_4 $ has density
\begin{equation}
    \label{eq-example-rho}
    \rho = \rho_{\beta} \otimes \rho_{\beta}.
\end{equation}
It is faithful because $ \beta > 0 $, and $ \mathbf{Z}^T \mathbf{S} + \mathbf{S} \mathbf{Z} = - \mathbf{C} $ proves invariance by \eqref{eq-gaussian-action}. Theorem~\ref{theorem-main} therefore gives uniqueness of the normal invariant state, immediate trace-class membership of every induced semigroup, and compact resolvent of every induced generator. In the KMS embedding, the common inverse temperature makes $ \mathbf{S}_{1/2} $ scalar, so $ \mathbf{Z}_{1/2} = \mathbf{Z} $. The nontrivial kernel in \eqref{eq-example-symmetric-part} shows that the KMS spectral gap vanishes, as established in \cite[Example~3.9]{li2026kms}.

In fact, the coercive spectral gap vanishes for every embedding. By Lemma~\ref{lemma-explicit-metric}, $ \mathbf{S}_s = c_{s, 1} \1_4 + \mi d_{s, 1} \mathbf{J} $, with the coefficients evaluated at the common inverse temperature $ \beta $. The kernel in \eqref{eq-example-symmetric-part} is invariant under $ \mathbf{J} $, hence under $ \mathbf{S}_s^{ - 1/2} $. The skew-adjoint part of $ \mathbf{Z} $ commutes with $ \mathbf{J} $ and therefore with $ \mathbf{S}_s $. Consequently,
\begin{equation*}
    \mathbf{Z}_s \xi = \frac{\mathbf{Z} - \mathbf{Z}^*}{2} \xi, \qquad
    \xi \in \ker ( \mathbf{Z} + \mathbf{Z}^* ).
\end{equation*}
Regard any unit vector $ \xi $ in this kernel as a one-particle Fock vector. Then $ Y = U_s \xi $ belongs to $ \dom L^{ ( s ) } $, is orthogonal to $ \rho^{1/2} $, and satisfies $ \Re \langle Y, L^{ ( s ) } Y \rangle_2 = 0 $. Contractivity gives $ \gap L^{ ( s ) } \geq 0 $, so $ \gap L^{ ( s ) } = 0 $ for every $ s \in [ 0, 1 ] $. Thus immediate trace-class membership does not imply the existence of a spectral gap, even in the KMS embedding.
\end{example}

\begin{example}[A spectral gap with an infinite-dimensional kernel]\label{example-gap-infinite-kernel}
Choose $ \gamma > 0 $ and $ \beta > 0 $ independently. Let $ \mathcal{R}_t $ be the one-mode quantum Ornstein--Uhlenbeck semigroup studied by Cipriani, Fagnola, and Lindsay \cite[Eq.~(1.2) and Section~4]{cipriani2000spectral}, with drift $ - \gamma \1_2 $ and faithful thermal invariant state $ \rho_{\beta} $ from \eqref{eq-thermal-density}. Here $ \1_2 $ is the identity on the auxiliary one-particle space $ \C^2 $. The semigroup has zero Hamiltonian and noise operators
\begin{equation*}
    L_{-} = \sqrt{\frac{2 \gamma}{1 - \me^{ - \beta}}}\,a,
    \qquad L_{+} = \sqrt{\frac{2 \gamma \me^{ - \beta}}{1 - \me^{ - \beta}}}\,a^\dagger.
\end{equation*}
The squared coefficients of $ a $ and $ a^\dagger $ have difference $ 2 \gamma $ and creation-to-annihilation ratio $ \me^{ - \beta} $. Thus $ \gamma $ fixes the one-particle decay rate, while $ \beta $ fixes the inverse temperature; varying $ \beta $ at fixed $ \gamma $ changes the coefficients of both noise operators.

On two modes, set $ \cT_t = \1_{\cB ( \Gamma ( \C ) ) } \otimes \mathcal{R}_t $ and choose the faithful invariant density $ \rho = \rho_{\beta} \otimes \rho_{\beta} $. The first factor leaves every observable unchanged, so every faithful first-mode density tensored with $ \rho_{\beta} $ is invariant. In particular, the invariant state is not unique. Fix $ s \in [ 0, 1 ] $, and let $ G^{ ( s ) } $ denote the generator induced by $ \mathcal{R}_t $ with invariant density $ \rho_{\beta} $. Identify the two-mode Hilbert--Schmidt space with $ \cB_2 ( \Gamma ( \C ) ) \otimes \cB_2 ( \Gamma ( \C ) ) $. For $ A, B \in \cB_2 ( \Gamma ( \C ) ) $, the product embedding gives
\begin{equation*}
    T_t^{ ( s ) } ( A \otimes B ) = A \otimes \me^{t G^{ ( s ) }} B, \qquad t \geq 0.
\end{equation*}
For $ B \in \dom G^{ ( s ) } $, differentiation at $ t = 0 $ yields
\begin{equation*}
    L^{ ( s ) } ( A \otimes B )
    = \lim_{t \downarrow 0} A \otimes \frac{\me^{t G^{ ( s ) }} B - B}{t}
    = A \otimes G^{ ( s ) } B.
\end{equation*}
The first-mode identity semigroup has generator zero. The product rule therefore gives
\begin{equation*}
    L^{ ( s ) } = 0 \otimes \1 + \1 \otimes G^{ ( s ) } = \1 \otimes G^{ ( s ) }.
\end{equation*}
Here each $ \1 $ acts on the corresponding Hilbert--Schmidt factor, and the operator identity denotes the closure of the displayed action on elementary tensors with $ B \in \dom G^{ ( s ) } $.

Let $ U_{s, \beta} : \Gamma ( \C^2 ) \to \cB_2 ( \Gamma ( \C ) ) $ be the one-mode unitary from Theorem~\ref{theorem-all-s-representation} associated with $ \rho_{\beta} $. The scalar drift is unchanged by the similarity transform, so
\begin{equation*}
    U_{s, \beta}^* G^{ ( s ) } U_{s, \beta} = - \gamma \lambda ( \1_2 ), \qquad U_{s, \beta} e ( 0 ) = \rho_{\beta}^{1/2}.
\end{equation*}
For an arbitrary two-mode Hilbert--Schmidt vector $ Y $, apply this unitary identification only in the second factor and write
\begin{equation*}
    f := ( \1 \otimes U_{s, \beta}^* ) Y = \bigoplus_{n = 0}^{\infty} f_n, \qquad
    f_n \in \cB_2 ( \Gamma ( \C ) ) \otimes ( \C^2 ) ^{ \odot n}.
\end{equation*}
Here $ f_n $ is the component with $ n $ particles in the thermal factor; the first factor remains unrestricted. Unitarity gives $ \norm{Y}_2^2 = \norm{f}^2 = \sum_{n = 0}^{\infty} \norm{f_n}^2 $. By Corollary~\ref{corollary-generator-domain}, applied in the second factor, $ Y \in \dom L^{ ( s ) } $ if and only if $ \sum_{n = 0}^{\infty} \gamma^2 n^2 \norm{f_n}^2 < \infty $, and then
\begin{equation*}
    ( \1 \otimes U_{s, \beta}^* ) L^{ ( s ) } Y = \bigoplus_{n = 0}^{\infty} ( - \gamma n ) f_n.
\end{equation*}
The kernel corresponds to the $ n = 0 $ subspace. Since $ U_{s, \beta} e ( 0 ) = \rho_{\beta}^{1/2} $, this gives $ \ker L^{ ( s ) } = \cB_2 ( \Gamma ( \C ) ) \otimes \C \rho_{\beta}^{1/2} $, and $ Y \perp \ker L^{ ( s ) } $ is equivalent to $ f_0 = 0 $. For every such generator-domain vector, the unitarity of $ \1 \otimes U_{s, \beta}^* $ and the orthogonality of the particle components give
\begin{equation*}
    \langle Y, L^{ ( s ) } Y \rangle_2 = \left\langle ( \1 \otimes U_{s, \beta}^* ) Y, ( \1 \otimes U_{s, \beta}^* ) L^{ ( s ) } Y \right\rangle = \sum_{n = 0}^{\infty} \langle f_n, - \gamma n f_n \rangle = - \gamma \sum_{n \geq 1} n \norm{f_n}^2.
\end{equation*}
Taking the negative real part and using $ f_0 = 0 $ therefore yields
\begin{equation*}
    - \Re \langle Y, L^{ ( s ) } Y \rangle_2
    = \gamma \sum_{n \geq 1} n \norm{f_n}^2
    \geq \gamma \sum_{n \geq 1} \norm{f_n}^2
    = \gamma \norm{Y}_2^2.
\end{equation*}
The series converges by the domain condition and Cauchy--Schwarz. Conversely, choose a unit vector $ f $ supported only in the thermal $ n = 1 $ subspace and set $ Y = ( \1 \otimes U_{s, \beta} ) f $. Then $ Y $ belongs to the domain, is orthogonal to the kernel, and satisfies $ L^{ ( s ) } Y = - \gamma Y $. It therefore attains the value $ - \Re \langle Y, L^{ ( s ) } Y \rangle_2 = \gamma $. The definition \eqref{eq-definition-coercive-gap} now gives
\begin{equation}
    \label{eq-infinite-kernel-gap}
    \ker L^{ ( s ) } = \cB_2 ( \Gamma ( \C ) ) \otimes \C \rho_{\beta}^{1/2},
    \qquad \gap L^{ ( s ) } = \gamma.
\end{equation}
The stationary directions contributed by the first factor all belong to the full kernel and are excluded when computing this gap.

Finally, on the infinite-dimensional kernel, for $ \eta > 0 $ and $ t > 0 $,
\begin{equation*}
    \left.( \eta \1 - L^{ ( s ) } ) ^{ - 1}\right|_{\ker L^{ ( s ) }}
    = \eta^{ - 1} \1_{\ker L^{ ( s ) }}, \qquad
    \left.T_t^{ ( s ) }\right|_{\ker L^{ ( s ) }} = \1_{\ker L^{ ( s ) }}.
\end{equation*}
Neither restriction is compact, since a nonzero scalar multiple of the identity on an infinite-dimensional space is not compact. Thus a positive coercive spectral gap on the orthogonal complement of the full kernel does not imply compact resolvent or compactness at positive times without uniqueness of the invariant state.
\end{example}

\section*{Use of artificial intelligence tools}

During the preparation of this manuscript, the author used ChatGPT to assist with language editing, organization of the exposition, and exploratory discussion of mathematical arguments. The author formulated the mathematical content, wrote and revised the manuscript, and takes full responsibility for its content.


\begin{thebibliography}{LWW26}
\setlength{\parsep}{0pt}
\setlength{\itemsep}{0pt}
\renewcommand{\makelabel}[1]{#1\hss}
\bibitem[AFP22]{agredo2022decoherence}
J.~Agredo, F.~Fagnola, and D.~Poletti.
``The decoherence-free subalgebra of {Gaussian} quantum {Markov} semigroups''.
In: \emph{Milan Journal of Mathematics} 90 (2022), pp.~257--289.
\textsc{doi}: \href{https://doi.org/10.1007/s00032-022-00355-0}{\nolinkurl{10.1007/s00032-022-00355-0}}.


\bibitem[ASS95]{ArakiSinhaSunder1995}
H.~Araki, K.~B. Sinha, and V.~S. Sunder.
``On boundedness and $ \norm{\cdot}_p $ continuity of second quantisation''.
In: \emph{Publications of the Research Institute for Mathematical Sciences} 31.5 (1995), pp.~941--952.
\textsc{doi}: \href{https://doi.org/10.2977/PRIMS/1195163724}{\nolinkurl{10.2977/PRIMS/1195163724}}.


\bibitem[CF00]{carbone2000exponential}
R.~Carbone and F.~Fagnola.
``Exponential $ L^2 $-convergence of quantum {Markov} semigroups on $ \mathcal{B} ( h ) $''.
In: \emph{Mathematical Notes} 68.4 (2000), pp.~452--463.
\textsc{doi}: \href{https://doi.org/10.1007/BF02676724}{\nolinkurl{10.1007/BF02676724}}.


\bibitem[CFL00]{cipriani2000spectral}
F.~Cipriani, F.~Fagnola, and J.~M. Lindsay.
``Spectral analysis and {Feller} property for quantum {Ornstein--Uhlenbeck} semigroups''.
In: \emph{Communications in Mathematical Physics} 210.1 (2000), pp.~85--105.
\textsc{doi}: \href{https://doi.org/10.1007/s002200050773}{\nolinkurl{10.1007/s002200050773}}.


\bibitem[CG02]{ChojnowskaMichalikGoldys2002}
A.~Chojnowska-Michalik and B.~Goldys.
``Symmetric {Ornstein--Uhlenbeck} semigroups and their generators''.
In: \emph{Probability Theory and Related Fields} 124.4 (2002), pp.~459--486.
\textsc{doi}: \href{https://doi.org/10.1007/s004400200222}{\nolinkurl{10.1007/s004400200222}}.


\bibitem[CG96]{ChojnowskaMichalikGoldys1996}
A.~Chojnowska-Michalik and B.~Goldys.
``Nonsymmetric {Ornstein--Uhlenbeck} semigroup as second quantized operator''.
In: \emph{Journal of Mathematics of Kyoto University} 36.3 (1996), pp.~481--498.
\textsc{doi}: \href{https://doi.org/10.1215/kjm/1250518505}{\nolinkurl{10.1215/kjm/1250518505}}.


\bibitem[EN00]{EngelNagel2000}
K.-J.~Engel and R.~Nagel.
\emph{One-Parameter Semigroups for Linear Evolution Equations}.
Graduate Texts in Mathematics~194.
New York: Springer, 2000.
\textsc{doi}: \href{https://doi.org/10.1007/b97696}{\nolinkurl{10.1007/b97696}}.

\bibitem[EL77]{EvansLewis1977}
D.~E. Evans and J.~T. Lewis.
\emph{Dilations of Irreversible Evolutions in Algebraic Quantum Theory}.
Communications of the Dublin Institute for Advanced Studies, Series A: Theoretical Physics, no.~24.
Dublin: Dublin Institute for Advanced Studies, 1977.
\textsc{url}: \url{https://orca.cardiff.ac.uk/id/eprint/34031/}.


\bibitem[FL25]{fagnola_li_2025spectral}
F.~Fagnola and Z.~Li.
\emph{Spectral analysis for {Gaussian} quantum {Markov} semigroups}.
2025.
arXiv: \href{https://arxiv.org/abs/2504.12162v2}{\nolinkurl{2504.12162v2}} (revised 2026).


\bibitem[FPSU25]{fagnola2025spectral}
F.~Fagnola, D.~Poletti, E.~Sasso, and V.~Umanit\`{a}.
``The spectral gap of a Gaussian quantum Markovian generator''.
In: \emph{Journal of Functional Analysis} 289.10 (2025), p.~111119.
\textsc{doi}: \href{https://doi.org/10.1016/j.jfa.2025.111119}{\nolinkurl{10.1016/j.jfa.2025.111119}}.

\bibitem[GGM04]{GeorgescuGerardMoller2004}
V.~Georgescu, C.~G\'{e}rard, and J.~S. M{\o}ller.
``Spectral theory of massless {Pauli--Fierz} models''.
In: \emph{Communications in Mathematical Physics} 249.1 (2004), pp.~29--78.
\textsc{doi}: \href{https://doi.org/10.1007/s00220-004-1111-x}{\nolinkurl{10.1007/s00220-004-1111-x}}.
Author manuscript: \href{https://www.imo.universite-paris-saclay.fr/~christian.gerard/publis/nelson-massless.pdf}{April 2003 version}.

\bibitem[GP26]{girotti2025gaussian}
F.~Girotti and D.~Poletti.
``{Gaussian} quantum {Markov} semigroups on finitely many modes admitting a normal invariant state''.
In: \emph{Journal of Mathematical Analysis and Applications} 556.1 (2026), p.~130150.
\textsc{doi}: \href{https://doi.org/10.1016/j.jmaa.2025.130150}{\nolinkurl{10.1016/j.jmaa.2025.130150}}.


\bibitem[KP04]{ko2004construction}
C.~K. Ko and Y.~M. Park.
``Construction of a family of quantum {Ornstein--Uhlenbeck} semigroups''.
In: \emph{Journal of Mathematical Physics} 45.2 (2004), pp.~609--627.
\textsc{doi}: \href{https://doi.org/10.1063/1.1641150}{\nolinkurl{10.1063/1.1641150}}.


\bibitem[Li26]{li2026kms}
Z.~Li.
\emph{On the existence of the {KMS} spectral gap in {Gaussian} quantum {Markov} semigroups}.
2026.
arXiv: \href{https://arxiv.org/abs/2512.23414v2}{\nolinkurl{2512.23414v2}}.


\bibitem[LWW26]{LiuWanWu2026}
Z.~Liu, J.~Wan, and J.~Wu.
\emph{Entropy contraction and hypercontractivity for {Gaussian} quantum {Markov} semigroups}.
2026.
arXiv: \href{https://arxiv.org/abs/2610.06201v1}{\nolinkurl{2610.06201v1}}.


\bibitem[Par92]{parthasarathy1992quantum}
K.~R. Parthasarathy.
\emph{An Introduction to Quantum Stochastic Calculus}.
Monographs in Mathematics~85.
Basel: Birkh\"{a}user, 1992.
\textsc{doi}: \href{https://doi.org/10.1007/978-3-0348-8641-3}{\nolinkurl{10.1007/978-3-0348-8641-3}}.

\bibitem[Par22]{parthasarathy2021pedagogical}
K.~R. Parthasarathy.
``A pedagogical note on the computation of relative entropy of two $ n $-mode Gaussian states''.
In: \emph{Infinite Dimensional Analysis, Quantum Probability and Applications}.
Vol.~390. Springer Proceedings in Mathematics \& Statistics.
Cham: Springer, 2022, pp.~55--72.
\textsc{doi}: \href{https://doi.org/10.1007/978-3-031-06170-7_2}{\nolinkurl{10.1007/978-3-031-06170-7_2}}.
arXiv: \href{https://arxiv.org/abs/2102.06708v2}{\nolinkurl{2102.06708v2}}.

\bibitem[Sim05]{Simon2005}
B.~Simon.
\emph{Trace Ideals and Their Applications}.
2nd ed. Vol.~120. Mathematical Surveys and Monographs.
Providence, RI: American Mathematical Society, 2005.
\textsc{doi}: \href{https://doi.org/10.1090/surv/120}{\nolinkurl{10.1090/surv/120}}.

\bibitem[vN05]{vanNeerven2005}
J.~M.~A.~M. van Neerven.
``Second quantization and the $ L^p $-spectrum of nonsymmetric {Ornstein--Uhlenbeck} operators''.
In: \emph{Infinite Dimensional Analysis, Quantum Probability and Related Topics} 8.3 (2005), pp.~473--495.
\textsc{doi}: \href{https://doi.org/10.1142/S0219025705002074}{\nolinkurl{10.1142/S0219025705002074}}.

\bibitem[Wir26]{Wirth2026}
M.~Wirth.
\emph{The {KMS} and {GNS} spectral gap of quantum {Markov} semigroups}.
2026.
arXiv: \href{https://arxiv.org/abs/2604.21630v1}{\nolinkurl{2604.21630v1}}.


\end{thebibliography}
\end{document}